\documentclass[11pt,a4paper,reqno]{amsart}

\setlength{\textheight}{52\baselineskip}    
\setlength{\textwidth}{15.5cm}    
\setlength{\voffset}{-3\baselineskip}    

\setlength{\oddsidemargin}{21pt}
\setlength{\evensidemargin}{21pt}

\usepackage[utf8]{inputenc}

\usepackage{array,amsbsy,amscd,amsfonts,color,amssymb,amstext,
amsmath,latexsym,amsthm,amscd,multicol,graphicx,tikz}
\usepackage[english]{babel}
\usepackage{hyperref}

\numberwithin{equation}{section}

\makeindex

\newtheorem{theorem}{Theorem}
\newtheorem{proposition}[theorem]{Proposition}
\newtheorem{lemma}[theorem]{Lemma}

\theoremstyle{definition}
\theoremstyle{definition}\newtheorem{definition}[theorem]{Definition}
\theoremstyle{definition}
\theoremstyle{definition}\newtheorem{example}[theorem]{Example}
\theoremstyle{definition}
\theoremstyle{definition}
\theoremstyle{definition}

\numberwithin{theorem}{section}

\def\proofof [#1] {\noindent {\bf Proof of #1. } }

\def\v #1.{\mathord{\raise 3pt\hbox{\mathsurround=0pt $\mathop\vee\limits^{#1}$\mathsurround=5pt}}}

\def\al #1.{{\mathcal{#1}}}

\renewcommand{\AA}{\mathfrak{A}}

\newcommand{\A}{\mathcal{A}}

\newcommand{\I}{\mathcal{I}}

\renewcommand{\H}{\mathcal{H}}

\newcommand{\N}{\mathbb{N}}
\newcommand{\NN}{\mathbb{N}_0}
\newcommand{\R}{\mathbb{R}}
\newcommand{\C}{\mathbb{C}}

\newcommand{\T}{\mathcal{T}}

\newcommand{\unit}{\mathbf{1}}

\newcommand{\bp}{\begin{proof}}
\newcommand{\ep}{\end{proof}}
\newcommand{\bdp}{\begin{dproof}}
\newcommand{\edp}{\end{dproof}}
\newcommand{\ra}{\rightarrow}

\newcommand{\SVir}{\operatorname{SVir}}

\newcommand{\Ad}{\operatorname{Ad }}
\newcommand{\ad}{\operatorname{ad }}

\newcommand{\rmd}{\operatorname{d}}
\newcommand{\rmi}{\operatorname{i}}

\newcommand{\acont}{\operatorname{anal.cont}}

\newcommand{\rme}{\operatorname{e}}

\newcommand{\sgn}{\operatorname{sgn }}

\newcommand{\dom}{\operatorname{dom }}
\newcommand{\im}{\operatorname{im }}

\newcommand{\Cci}{C^{\infty}}

\newcommand{\ie}{{i.e.,\/}\ }

\newcommand{\eg}{{e.g.\/}\ }
\newcommand{\cf}{{cf.\/}\ }

\setcounter{tocdepth}{1}

\author{Robin Hillier} 
\subjclass{81T28, 81T75, 46L55. Keywords: algebraic conformal quantum field theory, entire cyclic cohomology, JLO cocycle, KMS condition, supersymmetry.}
\thanks{Formerly supported as Marie-Curie Fellow of the Istituto Nazionale di
Alta Matematica, Roma, and by the ERC
Advanced Grant 227458 ``Operator Algebras and Conformal Field Theory".}
\title{Local-Entire Cyclic Cocycles for \linebreak Graded Quantum Field Nets}
\address{Department of Mathematics and Statistics, Lancaster University,
Lancaster LA1 4YF, UK\linebreak
E-mail: {\tt r.hillier@lancaster.ac.uk}}

\begin{document}

\begin{abstract}
In a recent paper we studied general properties of super-KMS functionals on
graded quantum dynamical systems coming from graded translation-covariant
quantum field nets over $\R$, and we carried out a detailed analysis of these
objects on certain models of superconformal nets. In the present article we show
that these locally bounded functionals give rise to local-entire cyclic cocycles
(generalized JLO cocycles) which are homotopy-invariant for a suitable class of
perturbations of the dynamical system. Thus we can associate meaningful
noncommutative geometric invariants to those graded quantum dynamical systems.
\end{abstract}

\maketitle


\section{Introduction}

KMS states on C*-algebras play a crucial role in quantum statistical mechanics
and operator algebras, providing among other things a meaningful abstraction of
thermodynamical equilibrium states on quantum systems, \cf \cite{MS,BR2}. They
have been considered in the framework of algebraic quantum field theory in
several places like \cite{BJ,Haag} and more recently in \cite{CLTW1,CLTW2}, and
that is also the context we are interested in here.

Algebraic quantum field theory has been developed as an operator algebraic
mathematically rigorous approach to quantum field theory using nets of operator
algebras \cite{Haag}. One of
its many important aspects is supersymmetry, an internal symmetry between
bosons and fermions, \ie even and odd elements of the algebra, respectively.
Although physical experimental confirmation is lacking so far, its mathematical
structure is very rich and extends to the general context of C*-algebras, in
which we are going to work here. A famous application of supersymmetry is found
in Connes's concept of spectral triples \cite{Con94}. Given such a spectral
triple with $\theta$-summability conditions, one constructs in a natural way a
``super-Gibbs functional", \ie a supersymmetric or graded version of the usual
Gibbs states (special cases of KMS states) from statistical mechanics. This
super-Gibbs functional then gives rise to an entire cyclic cocycle, \cf
also \cite{Con89,JLO1}; it turns out to be a noncommutative geometric invariant
for certain  ``regular perturbations" of the spectral triple and its
corresponding dynamics. This construction
is the starting point for a noncommutative geometric description of
graded-local 
conformal nets of quantum fields over the circle $S^1$, as achieved in
\cite{CHL12}. The cocycles there give rise to geometric invariants, which, in
particular, recover parts of the
representation theory of the graded-local conformal net, as partly already
suggested in \cite{Lo01,KL06,CKL,CHKL}.

It seems natural to ask whether this construction of entire cyclic cocycles can
be generalized from spectral triples and super-Gibbs functionals to more
general dynamics (in particular translations), supersymmetry and functionals,
and whether it still gives
rise to noncommutative geometric invariants and thermodynamical interpretations.
In fact, Jaffe, Lesniewski and Wisniowski \cite{JLW} and independently Kastler
\cite{Kas} took the first steps into that direction.
They introduced a graded version of the KMS condition for states; we call it
the super-KMS condition and the functionals satisfying this condition
we call super-KMS functionals. They showed that bounded super-KMS
functionals give rise to entire cyclic cocycles. Unfortunately, at that time
several no-go theorems were still unknown, in particular that nontrivial
super-KMS functionals for translation-covariant quantum field nets cannot
be bounded \cite{BL00}, which turned their constructions out to be inapplicable
here. It took several years to construct at least one first example of algebraic
supersymmetry with unbounded but locally bounded super-KMS functionals and
associated local-entire cyclic cocycle for the supersymmetric free field
\cite{BG}, which, however, still had to be put into the framework of graded nets
of von Neumann algebras. In the preceding paper \cite{Hil1}, we took precisely
that step: having studied a few general aspects of super-KMS
functionals, we carried out a detailed analysis of super-KMS functionals for 
the supersymmetric free field net and subsequently also for some other models.
The meaning of those functionals relates to supersymmetric dynamics and phase
transitions as briefly outlined there. Yet we have not treated relations to
entire cyclic cohomology \cite{JLW} so far, which was actually one of the
initial motivations. 

Our question is thus: \emph{Do super-KMS functionals for graded
translation-covariant nets give rise to entire cyclic cocycles and geometric
invariants of the net, generalizing \cite{CHL12}?}

To this end, we start by presenting a general construction of local-entire
cyclic cocycles out of local-exponentially bounded super-KMS functionals for
graded translation-covariant nets over $\R$, which
is mainly due to \cite{BG}; in the main part we then show that these cocycles
are nontrivial and form geometric invariants for ``regular perturbations" of our
dynamical system. As a problem this has been pointed out in \cite[Sec.7]{BG} for
the specific model treated in Example \ref{ex:JLO-FF} and more vaguely already
in \cite[Sec.6\&7]{Lo01}. A subsequent deeper investigation of the
involved geometric invariants, probably related to index and K-theory and
possibly recovering parts of the representation theory as recently achieved in
\cite{CHL12} would be a natural task for future. We should stress that the
setting in \cite{CHL12} is different and analytically substantially easier as we
have a
$\theta$-summable spectral triple and a (bounded) super-Gibbs functional (with
respect to rotations) available.

After a short section on preliminaries as found in more detail in 
\cite[Sec.2]{Hil1}, we provide the cocycle construction in Section
\ref{sec:JLO}, and discuss perturbations, homotopy-invariance, and a brief
example in the final
and main Section \ref{sec:perturb}.

In the present paper, we deal with quantum field nets over $\R$, whose physical
meaning is
that of a chiral component over a light-ray in two-dimensional Minkowski
spacetime. One might, however, in some cases regard them also as
restrictions to one light-ray  of certain special nets over higher-dimensional
manifolds. Moreover, such nets are deeply related to the concept of filtration
in quantum probability, and a connection point and applications to that area are
quite likely, yet beyond the scope of the present article. Thus we expect the
ideas found here to be of interest in a much wider context.

\section{Notation and preliminaries on super-KMS functionals}\label{sec:gen}

Let $\I$ stand for the set of nontrivial bounded open intervals in $\R$. 
For every interval $I\in\I$ we write $|I|:= \sup \{ |x| : x\in I\}$.

A \emph{graded translation-covariant net $\A$ over $\R$} is a map $I \mapsto
\A(I)$ from the set $\I$ to the set of von Neumann algebras on a common
infinite-dimensional separable Hilbert space $\H$ satisfying the 
following properties:
\begin{itemize}
\item[-] \emph{Isotony.} $\A(I_1)\subset \A(I_2)$ if $I_1,I_2\in\I$ and $I_1\subset I_2$.
\item[-] \emph{Grading.} There is a fixed selfadjoint unitary $\Gamma\not=\unit$
(the grading unitary) on $\H$ satisfying
$\Gamma \A(I) \Gamma = \A(I)$ for all $I\in \I$. We write $\gamma=\Ad\Gamma$ and
define the usual \emph{graded commutator}
\begin{align*}
[x,y]=&\; xy+\frac14 (y-\gamma(y))(x-\gamma(x))- \frac14 (y+\gamma(y))(x-\gamma(x))\\
&- \frac14 (y-\gamma(y))(x+\gamma(x))- \frac14 (y+\gamma(y))(x+\gamma(x)), \quad x,y\in\A(I).
\end{align*}
\item[-] \emph{Translation-covariance.} There is a strongly continuous unitary 
representation on $\H$ of the translation group $\R$ with infinitesimal
generator $P$, commuting with $\Gamma$, and such that 
\[
 \rme^{\rmi tP} \A(I) \rme^{-\rmi t P} = \A(t+I), \quad t\in\R, I\in\I ,
\]
and the corresponding point-strongly continuous one-parameter automorphism 
group $(\alpha_t)_{t\in\R}$ (\ie $t\mapsto \alpha_t(x)$ is $\sigma$-weakly
continuous, for every $x\in\A(I)$) restricts to *-isomorphisms from $\A(I)$ to
$\A(t+I)$, for every $t\in\R$ and $I\in\I$, and is asymptotically
graded-abelian:
\[
\lim_{t\ra \infty} [x,\alpha_t(y)] = 0, \quad x,y\in \A(I), I\in\I.
\]
\item[-] \emph{Positivity of the energy.} $P$ is positive.
\end{itemize}

The \emph{universal} or \emph{quasi-local  C*-algebra} corresponding to a 
(graded) net $\A$ over $\R$ is defined as the C*-direct limit
\[
\AA := \lim_{\ra} \A(I)
\]
over $I\in\I$, \cf also \cite{BR2,CLTW1,Haag}, noting that $\I$ is directed,
and its norm will be simply denoted by $\|\cdot\|$. For all $I\in\I$, $\A(I)$
is naturally identified with a subalgebra of $\AA$. Throughout this paper we use
Gothic letters for the quasi-local C*-algebra of the net with corresponding
calligraphic letter. We write $\alpha$ again for the induced group of
automorphisms of $\AA$. 

Let $\A$ be a graded translation-covariant net. A \emph{superderivation} on 
$\AA$ with respect to the grading $\gamma$ and translation group $\alpha$ is a
linear map $\delta:\dom(\delta)\subset \AA\ra \AA$ such that:
\begin{itemize}
\item[$(i)$] $\dom(\delta)\subset \AA$ is an \emph{$\alpha$-$\gamma$-invariant} 
(\ie globally invariant under the action of every $\alpha_t$, $t\in\R$, as well
as $\gamma$) unital *-subalgebra, with 
\[
\alpha_t\circ\delta(x)=\delta\circ\alpha_t(x), \quad \gamma\circ\delta(x)=
-\delta\circ\gamma(x), \quad \delta(x^*)=\gamma(\delta(x)^*), \quad
x\in\dom(\delta), t\in\R;
\]
\item[$(ii)$] $\delta(xy)= \delta(x)y+\gamma(x)\delta(y)$, for all $x,y\in\dom(\delta)$,
\item[$(iii)$] $\delta_I := \delta \restriction_{\dom(\delta)\cap\A(I)}$ is  a
($\sigma$-weakly)-($\sigma$-weakly) closed $\sigma$-weakly densely defined map
with image in $\A(I)$,
\item[$(iv)$] $\Cci(\delta_I) := \bigcap_{n\in\N} \dom(\delta^n_I)  \subset
\dom(\delta_{0})\cap\A(I) \subset \A(I)$ is $\sigma$-weakly dense,
\end{itemize}
By $\dom(\cdot)_I$ we always mean $\dom(\cdot)\cap \A(I)$ and thus 
$\dom(\delta_I)=\dom(\delta)_I$; $\dom(\cdot)_c$ stands for the union over
$I\in\I$ of $\dom(\cdot)_I$, which in some cases may actually be equal to
$\dom(\cdot)$; in particular, $\Cci(\delta)_c = \bigcup_{I\in\I}
\bigcap_{n\in\N} \dom(\delta^n)_I$. We then call
$(\AA,\gamma,(\alpha_t)_{t\in\R},\delta)$ a \emph{graded quantum dynamical
system}. We shall be interested in modifications of the usual KMS condition on
$(\AA,\alpha)$, and we consider only the case of inverse temperature $\beta=1$;
this can always be achieved by rescaling if $\beta\not=0,\infty$.

All *-algebras in this paper are understood to be unital with unit $\unit$ and
all Hilbert spaces separable.

Given $t\in\R$, write
\[
\Delta^t_n:=\{ s\in \R^{n}: 0\le \sgn(t) s_1 \le ...\le \sgn(t) s_n\le |t| \},
\]
$\Delta_n:=\Delta^1_n$ and the tube 
\[
\T^n:= \{s\in\C^n: \Im(s) \in \Delta_n\}.
\]
Notice that $\T^1$ is the standard closed strip in the complex plane.

Super-KMS functionals are some of the central objects of this paper and we 
choose the following definition, which was motivated by the corresponding ones
in \cite{BG,BL00} but is actually stronger and more suitable for the theory and
examples developed in this paper and in \cite{Hil1}.

\begin{definition}\label{def:gen-sKMSfunctional}
A \emph{super-KMS functional} (in short \emph{sKMS functional}) $\phi$ on a
graded quantum dynamical system $(\AA, \gamma, (\alpha_t)_{t\in\R}, \delta)$
for a given translation-covariant net $\A$ is
a linear functional defined on a  *-subalgebra $\dom(\phi)\subset \AA$ such
that:
\begin{itemize}
\item[$(S_0)$] Domain properties: $\phi(x^*)=\overline{\phi(x)}$, for all 
$x\in\dom(\phi)$; $\dom(\phi)_I\subset \A(I)$ is $\sigma$-weakly dense, for all
$I\in\I$, and $\dom(\phi)$ is globally $\alpha$-$\gamma$-invariant.
\item[$(S_1)$] Local normality: $\phi_I:=\phi\restriction_{\dom(\phi)_I}$ is
bounded and extends to a normal (\ie $\sigma$-weakly continuous) linear
functional on $\A(I)$, denoted again $\phi_I$, for all $I\in\I$.
\item[$(S_2)$] sKMS property: for every $x,y\in \dom(\phi)$, there is  a
continuous function $F_{x,y}$ on the strip $\T^1$ which is analytic on the
interior, satisfying
\[
F_{x,y}(t) = \phi(x \alpha_t(y)),\quad F_{x,y}(t+\rmi) =  \phi
(\alpha_t(y)\gamma(x)), \quad t\in\R,
\]
and there are constants $C_0>0$ and $p_0\in2\N$ depending only on $x,y,\phi$ 
such that
\[
|F_{x,y}(t)| \le C_0 (1+|\Re(t)|)^{p_0}, \quad t\in\T^1.
\]
\item[$(S_3)$] Normalization: $\phi(\unit)=1$.
\item[$(S_4)$] Derivation invariance: $\im (\delta_I) \subset \dom(\phi_I)$, 
for all $I\in\I$, and $\phi\circ \delta =0$ on $\Cci(\delta)_c$.
\item[$(S_5)$] Weak supersymmetry: for every $x,z\in\dom(\phi)_c$ and 
$y\in\Cci(\delta)_c$, we have
\[
\phi(x\delta^2(y)z) = -\rmi\frac{\rmd}{ \rmd t} \phi(x\alpha_t(y)z)\restriction_{t=0}.
\]
\end{itemize}
Some sKMS functionals exhibit the following additional property:
\begin{itemize}
\item[$(S_6)$] Local-exponential boundedness: there are suitable
constants $C_1,C_2>0$ such that $\|\phi \restriction_{\dom(\phi)_I}\|$ is
bounded by $C_1\rme^{C_2 |I|^2}$, for every $I\in\I$. \end{itemize}
\end{definition}

Let us collect the following general properties from \cite[Sec.2]{Hil1}.

\begin{theorem}\label{prop:gen-obstruction}
Let $(\A,\gamma,\alpha)$ be a graded translation-covariant net and
\linebreak $(\phi,\dom(\phi))$ a functional on $(\AA,\gamma,\alpha)$ satisfying
$(S_0)$-$(S_3)$. Then the following holds:
\begin{itemize}
\item[$(1)$] $\phi$ is translation and grading invariant, \ie
\[
\phi\circ\alpha_t=\phi = \phi\circ \gamma, \quad t\in\R.
\]
\item[$(2)$] $\phi$ is neither positive nor bounded.
\item[$(3)$] The  functionals $|\phi_I|$ and $\phi_I^\pm:= \frac12  (|\phi_I|\pm
\phi_I)$ obtained through restriction are individually well-defined, bounded and
positive, but they do not form a directed system with respect to restriction, so
they do not give rise to positive (unbounded) functionals on $\AA$.
\end{itemize}
\end{theorem}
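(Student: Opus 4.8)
The plan is to read off $(1)$ from the strip-analyticity of the functions $F_{x,y}$ in $(S_2)$ by specializing one of the two arguments to $\unit$; to deduce $(2)$ from $(1)$ together with positivity of the energy and asymptotic graded-abelianness, which is exactly the no-go mechanism of \cite{BL00}; and to obtain $(3)$ from the local Jordan decomposition of $\phi_I$ combined with $(2)$. For $(1)$, grading invariance: apply $(S_2)$ to the pair $(x,\unit)$, so that $t\mapsto F_{x,\unit}(t)=\phi(x\alpha_t(\unit))=\phi(x)$ is constant on $\R$ while $F_{x,\unit}(t+\rmi)=\phi(\alpha_t(\unit)\gamma(x))=\phi(\gamma(x))$; since $F_{x,\unit}$ is continuous on the closed strip $\T^1$, holomorphic in its interior and constant on the lower boundary, the Schwarz reflection principle extends $F_{x,\unit}-\phi(x)$ holomorphically across $\R$ and the identity theorem forces it to vanish on all of $\T^1$, whence $\phi(\gamma(x))=\phi(x)$. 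Translation invariance: apply $(S_2)$ to $(\unit,y)$, so that $F_{\unit,y}(t)=\phi(\alpha_t(y))$ and $F_{\unit,y}(t+\rmi)=\phi(\alpha_t(y)\gamma(\unit))=\phi(\alpha_t(y))$ coincide on the two boundary lines; hence $\widetilde F(z):=F_{\unit,y}\bigl(z-\rmi\lfloor\Im z\rfloor\bigr)$ is well defined, continuous and $\rmi$-periodic on $\C$, holomorphic off the lines $\R+\rmi\Z$ and therefore entire, and the polynomial bound of $(S_2)$ (involving only $\Re$) makes it a polynomial; a periodic polynomial is constant, so $\phi(\alpha_t(y))=\phi(y)$.

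For $(2)$ I would argue by contradiction. If $\phi$ were bounded, then by $(S_1)$ the local normal extensions form a directed family and glue to a bounded, translation-invariant, normalized functional on $\AA$; each $F_{x,y}$ then has bounded boundary values, hence is bounded on $\T^1$ by a Phragm\'en--Lindel\"of argument using the polynomial bound of $(S_2)$. For homogeneous $x,y$ lying in a common $\A(I)$ (and hence, by norm density, for all $x,y\in\AA$) asymptotic graded-abelianness gives $[x,\alpha_t(y)]\to0$, so the two boundary functions $t\mapsto\phi(x\alpha_t(y))$ and $t\mapsto\phi(\alpha_t(y)\gamma(x))$ become equal, up to the grading sign, as $t\to+\infty$; combined with the analytic continuation of $t\mapsto\phi(x\alpha_t(y))$ into a half-plane furnished by $P\ge0$, this is precisely the configuration excluded in \cite{BL00}, forcing $\phi$ to vanish on a $\sigma$-weakly dense set and contradicting $\phi(\unit)=1$. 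Non-positivity then comes for free, since a positive functional on a unital $C^*$-algebra is automatically bounded: if $\phi\ge0$ every $\phi_I$ is positive and normal, so $\|\phi_I\|=\phi_I(\unit)=1$ and $\sup_I\|\phi_I\|<\infty$, contradicting the first half.

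For $(3)$, each $\phi_I$ is a bounded Hermitian normal functional on the von Neumann algebra $\A(I)$ (Hermitianity from $(S_0)$, normality from $(S_1)$), hence has a unique Jordan decomposition $\phi_I=\phi_I^+-\phi_I^-$ into orthogonal positive normal functionals; this gives the well-definedness, positivity and boundedness of $|\phi_I|:=\phi_I^++\phi_I^-$ and of $\phi_I^\pm$, together with $\||\phi_I|\|=\|\phi_I^+\|+\|\phi_I^-\|$ and the identities $\phi_I^\pm=\tfrac12(|\phi_I|\pm\phi_I)$. That these families are not directed I would prove by contradiction: by those identities, directedness of $\{|\phi_I|\}$ under restriction is equivalent to directedness of $\{\phi_I^+\}$ and of $\{\phi_I^-\}$, and then these would glue to positive functionals $\phi^\pm$ on $\bigcup_I\A(I)$, so that $\|\phi_I^\pm\|=\phi_I^\pm(\unit)=\phi^\pm(\unit)$ would be independent of $I$; this would give $\sup_I\|\phi_I\|\le\phi^+(\unit)+\phi^-(\unit)<\infty$, contradicting $(2)$. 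Hence no positive (necessarily unbounded) functional on $\AA$ arises this way.

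The genuine obstacle is the unboundedness claim in $(2)$: everything else is soft complex analysis (reflection principle, Morera and Liouville, Phragm\'en--Lindel\"of) and standard von Neumann-algebra theory (the Jordan decomposition), whereas the no-go part rests on the delicate interplay in \cite{BL00} between the sKMS strip-analyticity, asymptotic graded-abelianness, and the one-sided analyticity forced by positivity of the energy.
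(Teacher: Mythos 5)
The paper itself contains no proof of this theorem: it is imported from \cite{Hil1} with the remark ``Let us collect the following general properties from [Sec.2]{Hil1}'', so your attempt can only be measured against the standard argument that reference supplies. Your route is that argument, and it is essentially sound: part $(1)$ by specializing $(S_2)$ to the pairs $(x,\unit)$ and $(\unit,y)$ and using reflection (resp.\ periodization plus the polynomial bound and Liouville); the unboundedness in $(2)$ by reduction to the no-go theorem of \cite{BL00}, with non-positivity following because local normality plus Kaplansky density upgrades a positive $\phi$ to one with $\|\phi_I\|=\phi_I(\unit)=1$ uniformly in $I$; and $(3)$ by the Jordan decomposition of the Hermitian normal functionals $\phi_I$ together with the observation that directedness would reinstate a uniform bound $\sup_I\|\phi_I\|\le|\phi|(\unit)<\infty$, contradicting $(2)$. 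One caveat on your gloss of the \cite{BL00} mechanism: positivity of $P$ does not directly furnish half-plane analyticity of $t\mapsto\phi(x\alpha_t(y))$, since $\phi$ is neither positive nor implemented by a vacuum vector; the actual obstruction combines boundedness, the strip analyticity of $(S_2)$, and asymptotic graded-abelianness (for odd $x,y$ one has $F_{x,y}(t)-F_{x,y}(t+\rmi)=\phi\big(x\alpha_t(y)+\alpha_t(y)x\big)\to 0$) via a normal-families/Liouville argument. Since you defer to \cite{BL00} for that step in any case, this imprecision does not affect the validity of the proof.
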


The original motivation for sKMS functionals comes from the studies of
supersymmetric dynamics and ``phase transitions" between bosons and fermions.
This is discussed within the context of the free field model in
\cite[Sec.3]{Hil1}, together with a conditional uniqueness and existence proof.
Our main application of sKMS functionals here lies in local-entire cyclic
cohomology as carried out in the next section.

\section{JLO cocycles for super-KMS functionals}\label{sec:JLO}

Given an sKMS functional for a graded quantum dynamical system, Jaffe, 
Lesniewski and Wisniowski have found a natural way to associate an entire cyclic
cocycle \cite{JLW}, generalizing their famous construction of JLO cocycles for
super-Gibbs functionals with supercharges \cite{JLO1}. Unfortunately, \cite{JLW}
works only for bounded sKMS functionals (\eg as is the case with the rotation
group in \cite[Sec.4]{Hil1} for nets over $S^1$), which for the translation
group under usual assumptions do not exist according to Theorem
\ref{prop:gen-obstruction}(2). Buchholz and Grundling showed, however, that in a
special model a local-exponential bound similar to $(S_6)$ is satisfied, which
permits to associate a (local-)entire cyclic cocycle again, although involving
certain analytical difficulties \cite[Sec.6]{BG}. Their result and proof
actually generalize to sKMS functionals as in Definition
\ref{def:gen-sKMSfunctional} for a generic graded quantum dynamical systems.
Henceforth, let $\A$ be a generic graded translation-covariant net $\A$ on $\R$
and $((\AA,\|\cdot\|),\gamma,(\alpha_t)_{t\in\R},\delta)$ a corresponding
generic graded quantum dynamical system.

Given a normed algebra $(A,\|\cdot\|')$, we recall that the induced norm of  an
$n$-linear functional $\rho_n$ on $A$ is
\[
 \|\rho_n\|' = \sup_{x_i\in A} \frac{|\rho_n(x_0,...,x_n)|}{\|x_0\|'\cdot ...
\cdot \|x_n\|'}. 
\]

\begin{definition}\label{def:JLO-ECC}
A \emph{local-entire cochain} on a *-subalgebra $A\subset\AA$ is given by a 
sequence $(\rho_n)_{n\in\NN}$ of $(n+1)$-linear maps $\rho_n$ on $A$ with
$\rho_n(x_0,\ldots,x_n)=0$ if $x_i\in\C\unit$ for some $i=1,\ldots,n$, such
that
\[
\lim_{n\ra\infty} n^{1/2} \|\rho_n\restriction_{A\cap\A(I)}\|^{1/n} =0, \quad
I\in\I. 
\]
The even cochains are those with $\rho_{2n+1}=0$ and the odd cochains those with
$\rho_{2n}=0$, for all $n\in\NN$. A local-entire cochain is a \emph{local-entire
cyclic cocycle} if $\partial \rho =0$, where $\partial:= B+b$ maps even into odd
local-entire chains and v.v., and 
\begin{align*}
(b \rho)_n(x_0,..,x_n)
:=& \sum_{j=0}^{n-1} (-1)^{j} \rho_{n-1}(x_0,...,x_jx_{j+1},...,x_{n}) 
+ (-1)^n \rho_{n-1}(x_nx_0,x_1,...,x_{n-1})\\
(B \rho)_n(x_0,...,x_n) 
:=& \sum_{j=0}^n (-1)^{nj} \rho_{n+1}(\unit,x_j,...,x_{j-1}), \quad x_i\in A.
\end{align*}
\end{definition}

We consider here for $A$ in the above definition the *-algebra
$\Cci(\delta)^\gamma_c$ (the fixed points of $\Cci(\delta)_c$ under $\gamma$)
equipped with the graph norm $\|\cdot\|_* := \|\cdot \| + \|\delta(\cdot)\|$.

\begin{theorem}\label{prop:JLO-cocycle}
Given a local-exponentially bounded sKMS functional $\phi$ for the graded 
quantum dynamical system $(\AA,\gamma,(\alpha_t)_{t\in\R},\delta)$, the
expression
\[
\tau_n(x_0,...,x_n) := \acont_{t\ra\rmi} \int_{\Delta_n} \phi(x_0  \alpha_{
s_1t}(\delta(x_1))...\alpha_{s_n t}(\delta(x_n))) \rmd^n s, \quad
x_i\in\Cci(\delta)_c^\gamma,
\]
for even $n\in\NN$, and $\tau_n=0$, for odd $n\in\NN$, is welldefined,  and
gives rise to an even local-entire cyclic cocycle $(\tau_n)_{n\in\NN}$ on
$\Cci(\delta)_c^\gamma$, called the \emph{JLO cocycle}.
\end{theorem}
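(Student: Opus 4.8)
The strategy is to follow \cite[Sec.~6]{BG}: first set up the analytic continuation defining the $\tau_n$ together with the estimates making them a local-entire cochain, and then verify the cocycle identity by the by-now-standard JLO computation, the essential new inputs being the sKMS property $(S_2)$ and the weak supersymmetry $(S_5)$ of Definition~\ref{def:gen-sKMSfunctional}. For the \emph{well-definedness}, fix $x_0,\dots,x_n\in\Cci(\delta)^\gamma_c$. Since $\I$ is directed and $\Cci(\delta)^\gamma_c$ is a $*$-subalgebra, one can choose a single $I\in\I$ with $x_0,\dots,x_n\in\Cci(\delta_I)$, so that $x_0,\delta(x_1),\dots,\delta(x_n)\in\A(I)$. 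For real $t$ and $s\in\Delta_n$ the operator $x_0\,\alpha_{s_1t}(\delta(x_1))\cdots\alpha_{s_nt}(\delta(x_n))$ lies in a local algebra $\A(J)$ with $|J|$ controlled by $|I|$ and $|t|$ alone, so local normality $(S_1)$ makes the integrand well defined and jointly continuous in $(s,t)$, and $F_n(t):=\int_{\Delta_n}\phi(x_0\,\alpha_{s_1t}(\delta(x_1))\cdots\alpha_{s_nt}(\delta(x_n)))\,\rmd^n s$ is continuous on $\R$. The analytic core is the claim that for each fixed $s\in\Delta_n$ the function $t\mapsto\phi(x_0\,\alpha_{s_1t}(\delta(x_1))\cdots\alpha_{s_nt}(\delta(x_n)))$ extends to a function on the closed strip $\overline{\T^1}=\{0\le\Im t\le1\}$ that is continuous there and holomorphic in the interior; I would establish this by induction on $n$, peeling off translations one at a time — using the translation invariance of $\phi$ from Theorem~\ref{prop:gen-obstruction}(1) — so that each step reduces to the one-variable sKMS property $(S_2)$, and handling the passage to the closed strip (in particular continuity up to $\Im t=1$) by a Phragm\'en--Lindel\"of argument, licit because $(S_2)$ furnishes polynomial control in $\Re t$, essentially as in \cite[Sec.~6]{BG}. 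A Vitali--Morera argument, justified by the uniform bound of the next step, then transfers holomorphy through the $s$-integral, so $F_n$ continues to $\overline{\T^1}$; one sets $\tau_n(x_0,\dots,x_n):=\acont_{t\to\rmi}F_n(t)=\int_{\Delta_n}h_s(x_0,\dots,x_n)\,\rmd^n s$, where $h_s$ denotes the value at $t=\rmi$ of the pointwise continuation. Multilinearity is clear, $\tau_n=0$ for odd $n$ by fiat, and $\tau_n(x_0,\dots,x_n)=0$ whenever some $x_i\in\C\unit$ with $1\le i\le n$ because $\delta(\unit)=0$.

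Next, the \emph{local-entire estimate}. The point is that $|h_s(x_0,\dots,x_n)|\le K_I^{\,n+1}\|x_0\|_*\cdots\|x_n\|_*$ for all $s\in\Delta_n$, with a constant $K_I$ depending on $I$ but not on $n$: iterating the relations in $(S_2)$ one rewrites the value of the pointwise continuation at $t=\rmi$ as a finite alternating sum of at most $2^n$ terms, each of the form $\phi$ applied to a product of the $x_i$, the $\delta(x_i)$ and $\gamma$, all localized in one interval of length at most $|I|+1$ (the extra unit comes from the imaginary shifts, which stay in $[0,1]$ along the evaluation ray $t=\rmi y$, $y\uparrow 1$); then $(S_6)$ bounds each such term by $C_1\rme^{C_2(|I|+1)^2}$ times $\|x_0\|\prod_{i\ge1}\|\delta(x_i)\|$, and one absorbs the factor $2^n$ and the constants into $K_I$, using $\|x_i\|\le\|x_i\|_*$ and $\|\delta(x_i)\|\le\|x_i\|_*$. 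Integrating over $\Delta_n$ and using $\vol(\Delta_n)=1/n!$ gives $\|\tau_n\restriction_{\Cci(\delta)^\gamma_c\cap\A(I)}\|_*\le K_I^{\,n+1}/n!$, so $n^{1/2}\|\tau_n\restriction_{\Cci(\delta)^\gamma_c\cap\A(I)}\|_*^{1/n}\to0$ by Stirling, for every $I\in\I$; together with the vanishing on $\C\unit$ this shows $(\tau_n)_{n\in\NN}$ is an even local-entire cochain on $\Cci(\delta)^\gamma_c$ in the sense of Definition~\ref{def:JLO-ECC}.

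Finally, the \emph{cocycle identity} $\partial\tau=(B+b)\tau=0$. As $\tau$ is supported in even degrees, $b\tau$ and $B\tau$ are supported in odd degrees, so it suffices to check $(b\tau)_n+(B\tau)_n=0$ on $\Cci(\delta)^\gamma_c$ for odd $n$; this is the classical JLO calculation. Into the integral representation of $\tau_{n-1}$ appearing in $(b\tau)_n$ one substitutes the Leibniz rule $\delta(x_jx_{j+1})=\delta(x_j)x_{j+1}+x_j\delta(x_{j+1})$ — with $\gamma(x_j)=x_j$, since the arguments are $\gamma$-fixed — and splits the $\Delta_{n-1}$-integrals; re-expressing the $\alpha$-factors and using the translation and grading invariance of $\phi$ together with the sKMS relation $(S_2)$ (which here plays the role of the trace in the super-Gibbs situation) turns the alternating sum into a $\Delta_n$-integral plus contributions from the faces $s_k=s_{k+1}$ of the simplices. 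The face terms carrying a derivative in the deformation parameter are evaluated by the weak supersymmetry $(S_5)$, that is by $\phi(u\,\delta^2(y)\,v)=-\rmi\,\frac{\rmd}{\rmd t}\phi(u\,\alpha_t(y)\,v)\restriction_{t=0}$, the precise substitute for the relation $H=Q^2$ between Hamiltonian and supercharge; the remaining face terms reassemble into $-(B\tau)_n$. Throughout one justifies interchanging the analytic continuation with the simplex integrals and with differentiation by the uniform bounds above, but the combinatorial bookkeeping is identical to that in \cite{JLW, BG} (and, in the bounded case, \cite{JLO1}). This yields $\partial\tau=0$, so $(\tau_n)_{n\in\NN}$ is an even local-entire cyclic cocycle.

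The genuinely delicate part is the first two steps: producing the continuation $t\to\rmi$ on the whole strip and controlling it so that the only $n$-dependence of the resulting bound is $K_I^{\,n+1}/n!$ — in particular arranging that the interval needed to apply $(S_6)$ can be taken independently of $n$. By contrast, the cocycle identity, though lengthy, is a routine adaptation of the classical JLO computation once $(S_2)$ and $(S_5)$ are in hand.
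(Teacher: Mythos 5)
Your overall architecture matches the paper's (which itself adapts \cite[Th.~6.3--6.4]{BG}), but the two analytic steps that you yourself single out as ``the genuinely delicate part'' are not correctly handled. First, the continuation $t\to\rmi$. You propose to continue the one-variable function $t\mapsto\phi(x_0\alpha_{s_1t}(\delta(x_1))\cdots\alpha_{s_nt}(\delta(x_n)))$ for fixed $s$ by ``peeling off translations one at a time'' so that each step reduces to the one-variable sKMS property. But $(S_2)$ only allows you to push \emph{one} translation parameter into the strip while all the others remain real; after the first step you have one complex and $n-1$ real variables, and no one-variable statement lets you complexify the next parameter while the first stays complex. What is actually needed --- and what the paper uses --- is joint analyticity of $(s_1,\ldots,s_n)\mapsto\phi(x_0\alpha_{s_1}(x_1)\cdots\alpha_{s_n}(x_n))$ on the tube $\T^n$, obtained by gluing the $n$ single-variable continuations on the flat tubes $\R^{j-1}\times\T^1\times\R^{n-j}$ via the Bros--Buchholz flat tube theorem \cite[Lem.~A.2]{BB}; only then can one evaluate at the interior point $z=\rmi s$ with all components simultaneously imaginary. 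Your induction as described does not close without this ingredient.

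Second, the bound. You claim that ``iterating the relations in $(S_2)$'' rewrites the value of the continuation at $t=\rmi$ as a finite alternating sum of at most $2^n$ terms of the form $\phi(\text{product of local elements})$, each then controlled by $(S_6)$. This is not true: for $0<s_1<\cdots<s_n<1$ the point $\rmi s$ lies in the \emph{interior} of the tube, and the sKMS relation only expresses boundary values ($\Im=0$ or $\Im=1$ in a single variable); there is no finite algebraic formula for interior values. The paper obtains the estimate $\frac{C_1}{n!}\,\rme^{4C_2(|I|+1)^2(n+1)}\|x_0\|\,\|\delta(x_1)\|\cdots\|\delta(x_n)\|$ by a maximum-principle argument: a Gaussian damping factor makes the function bounded, the polynomial growth in $(S_2)$ plus Phragm\'en--Lindel\"of bounds it on each flat tube by its boundary values (which \emph{are} computable from $(S_6)$ and the sKMS relation), and a separate connectedness argument (the $f_\alpha$ trick) propagates that bound from the flat tubes to the whole tube. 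Without some such argument your claimed $K_I^{n+1}/n!$ estimate is unsubstantiated. The cocycle computation is right in outline, but note that derivation invariance $(S_4)$, $\phi\circ\delta=0$, is indispensable alongside $(S_5)$ in deriving the key identity for $\phi(\delta(x_0)\alpha_{\rmi s_1}(\delta(x_1))\cdots\alpha_{\rmi s_n}(\delta(x_n)))$, and you never invoke it.
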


\begin{proof}
The proof is basically given in \cite[Th.6.3\&6.4]{BG}, but for the reader's 
convenience and since some aspects of our setting are slightly different and
required again in the following section, we include it here with the
corresponding adjustments. The first part deals with local-entireness, the
second with the algebraic cocycle condition.

(Part 1.) Given $n\in\NN$ and $x_i\in\dom(\phi)_c$, $i=0,\ldots,n$, as in the
assumptions, there is $I\in\I$ such that all $x_i$ lie in $\dom(\phi)_I$.
We fix such an $I$ and write $A:=\dom(\phi)_I$ in the first part of the proof.
Then, for every $t_i\in\R$, we have $\alpha_{t_i}(x_i)\in\A(I+t_i)$ and
$\|\alpha_{t_i}(x_i)\|= \|x_i\|$ by the local *-property of $\alpha_{t_i}$;
property $(S_6)$ implies then
\[
\Big|\phi\big(\alpha_{t_0}(x_0)\cdots\alpha_{t_n}(x_n)\big)\Big|
\leq C_1 \rme^{C_2 (|I|+\sup_i |t_i|)^2}\|x_0\|\cdots\|x_n\|.
\]
We would like to find an upper bound for this in terms of analytic functions.
One can easily check that $(|I|+ \sup_i |t_i|)^2 \le
|I|^2+2|I|+(1+2|I|)\sum_{i=1}^n t_i^2$, so
\[
\Big|\phi\big(\alpha_{ t_0}(x_0)\cdots\alpha_{ t_n}(x_n)\big)\Big|
\leq B_1\exp\Big(B_2\sum_{i=1}^n t_i^2\Big)\|x_0\|\cdots\|x_n\|
\]
with constants $B_1=C_1 \rme^{C_2 (|I|^2+2|I|)}$ and $B_2=C_2(1+2|I|)$.
We are interested in an analytic continuation of this as a function of $t$.  For
all $s_i\in\R$, define:
\[
F_{x_0,\ldots ,x_n}(s_1,\ldots,s_n):=
\exp\Big(-B_2 \sum_{k=1}^n(s_1+\cdots+s_k)^2\Big)\cdot 
\phi\Big(x_0\alpha_{s_1}(x_1)\cdots\alpha_{s_1+\cdots+s_n}(x_n)\Big).
\]
Consequently, $|F_{x_0,\ldots ,x_n}(s_1,\ldots,s_n)|\leq
B_1\|x_0\|\cdots\|x_n\|$,  for all $s\in\R^n$, and by the sKMS property $(S_2)$
of $\phi$, the function $F_{x_0,\ldots ,x_n}$ can be analytically continued in
each variable $s_j$ to the strip $\T^1$. This way, we obtain functions
$F^{(j)}_{x_0,\ldots ,x_n}$ which are analytic in the flat tubes
$T^{(j)}:=\R^{j-1}\times \T^1\times\R^{n-j}$, for all $j=1,\ldots,n$; using the
flat tube theorem \cite[Lem.A.2]{BB} inductively, we thus obtain a unique
analytic continuation of $F_{x_0,\ldots ,x_n}$ into the tube $T:=\{z\in \C^n:
0\le \Im z_j \le 1,\; j=1,...,n; \; \sum_{j=1}^n \Im z_j \le 1\}$ (the convex
hull of $\bigcup_{j=1,\ldots,n}T^{(j)}$) coinciding with each
$F^{(j)}_{x_0,\ldots ,x_n}$ on $T^{(j)}$. For several purposes like the
entireness condition we need a bound for this analytic function $F_{x_0,\ldots
,x_n}$ on $T$, and we start by finding bounds for $F_{x_0,\ldots ,x_n}^{(j)}$. 
Let $G_{x_0,\ldots,x_n}(s_1,\ldots,s_n):={\phi\Big(x_0\alpha_{s_1}(x_1) 
\cdots\alpha_{s_1+\cdots+s_n}(x_n)\Big)}$ which has just been shown to have a
unique analytic continuation to each $T^{(j)}$, and by the growth condition in
$(S_2)$ we know that $\big|G_{x_0,\ldots,x_n}(s_1,\ldots,s_j+\rmi
r_j,\ldots,s_n)\big|\leq C_0(1+|s_j|)^{p_0}$ with certain scalars $C_0\in\R_+$
and $p_0\in\N$ independent of $s_j$ and $r_j\in[0,1]$. Then by the above
definition
\begin{equation}\label{eq:JLO-Fest}
\begin{aligned}
F_{x_0,\ldots ,x_n} ({s_1,\ldots,s_j+\rmi r_j,\ldots,s_n})
=& G_{x_0,\ldots,x_n}({ s_1,\ldots,s_j+\rmi r_j,\ldots,s_n}) \times \\
&\times \exp\big(B_2 r_j^2(n+1-j)- B_2\sum_{k=1}^n 
(s_1+\cdots+s_k)^2+\rmi\theta_{s,r}\big),
\end{aligned}
\end{equation}
for some $\theta_{s,r}\in\R$.
From the above polynomial growth property of $G_{x_0,\ldots,x_n}$ in $s_j$  we
conclude that $F^{(j)}_{x_0,\ldots ,x_n}$ is bounded. Thus according to the
Phragmen-Lindel\"of theorem, \cf \cite[Prop.5.3.5]{BR2}, the bound of the
analytic function $F^{(j)}_{ x_0,\ldots ,x_n}$ is attained on the boundary of
$T^{(j)}$. On the real part of the boundary 
of $T^{(j)}$ we have computed above:
$\big|F^{(j)}_{ x_0,\ldots ,x_n}(s_1,\ldots,s_n)\big| \leq B_1\|x_0\|\cdots\|x_n\|$.
By the sKMS property $(S_2)$ and translation invariance of $\phi$ we have on the
other part $\R^{j-1}\times(\rmi+\R)\times\R^{n-j}$:
\begin{align*}
\big|G_{x_0,\ldots,x_n}(s_1, \ldots, & s_j+\rmi,\ldots,s_n)\big|\\
=& \Big|\phi\Big(\alpha_{ s_1+\cdots s_j}(x_j)\cdots \alpha_{ s_1+\cdots+ s_n}
(x_n) x_0 \alpha_{ s_1}(x_1)\cdots\alpha_{ s_1+\cdots+s_{j-1}}
(x_{j-1})\Big)\Big| \\
\leq& B_1\exp\Big(B_2\sum_{k=1}^n (s_1+\cdots+s_k)^2\Big)\cdot\|x_0\|\cdots\|x_n\|,
\end{align*}
hence by \eqref{eq:JLO-Fest}:
\[
\big|F^{(j)}_{ x_0,\ldots ,x_n}(s_1,\ldots,z_j,\ldots,s_n)\big|  \leq
B_1\rme^{B_2 n}\|x_0\|\cdots\|x_n\|,\quad z_j\in \T^1.
\]

We have to show that this bound holds on all $T$. To this end let 
$C:=B_1\rme^{B_2 n}\|x_0\|\cdots\|x_n\|$ and define, for every
$\alpha\in[0,2\pi]$: $f_\alpha(z_1,\ldots,z_n):=\big( F_{ x_0,\ldots
,x_n}(z_1,\ldots,z_n) -\rme^{\rmi\alpha}C\big)^{-1}$, for all $z\in T$. Since
$z_j\mapsto f_\alpha (s_1,\ldots,z_j,\ldots,s_n)$ is analytic on the strip
$\T^1$, for every $j=1,\ldots,n$, the flat tube theorem \cite[Lem.A.2]{BB} again
implies that $f_\alpha$ has a unique analytic continuation to $T$, and hence
cannot have any singularities in  $T$, \ie $F_{ x_0,\ldots ,x_n}(z_1,\ldots,z_n)
\not=\rme^{\rmi\alpha}C$ for all $\alpha$. Since $F_{ x_0,\ldots ,x_n}$ is
continuous, its image set $F_{ x_0,\ldots ,x_n}(T)$ must be connected, and since
it has some points inside the circle of radius $C$, the entire image set is
inside that circle, so
\[
\big|F_{ x_0,\ldots ,x_n}(z_1,\ldots,z_n)\big|\leq B_1 \rme^{B_2
n}\|x_0\|\cdots\|x_n\|, \quad z\in T,\; x_i\in A.
\]
We finally perform a change of variables and summarize the above as follows:

\emph{For all $I\in\I$ and $x_i\in  \dom(\phi)_I$, the function
\begin{equation}\label{eq:JLO-analcont}
s\in\R^n \mapsto \phi \big( x_0\alpha_{s_1}(x_1)\cdots \alpha_{s_n}(x_n))
\end{equation}
has a unique analytic continuation to the tube $\T^n$, for which we write shortly
\[
z\in\T^n \mapsto \phi\big(x_0\alpha_{z_1}(x_1)\cdots \alpha_{z_n}(x_n)\big),
\]
although $\alpha_{z_i}(x_i)$ itself makes no sense for complex $z_i$.  The
continuation is bounded by
\begin{equation}\label{eq:JLO-exp-est}
|\phi\big(x_0\alpha_{z_1}(x_1)\cdots \alpha_{z_n}(x_n)\big)| \le C_1  \rme^{2C_2
(|I|+1)^2 (n+1)} \rme^{2C_2 (|I|+1) \sum_{k=1}^n |z_k|^2} \|x_0\|\cdots \|x_n\|,
\end{equation}
for all $z\in \T^n$.}

With the special choice $(z_1,\ldots,z_n)=\rmi(r_1,\ldots,r_n)\in \rmi\Delta_n$,
 we arrive at
\[
\big|\tau_n(x_0,\ldots,x_n)\big| \leq
\frac{C_1}{n!}\rme^{4C_2(|I|+1)^2(n+1)}\|x_0\|\;
\|\delta(x_1)\|\cdots\|\delta(x_n)\|
\]
since the volume of $\Delta_n$ is $1/n!$. Thus 
\[
n^{1/2} \|\tau_n\|_*^{1/n} \le n^{1/2}(C_1/n!)^{1/n} 
\rme^{4C_2(|I|+1)^2(n+1)/n} \sim n^{-1/2} C_1 \rme^{4C_2(|I|+1)^2} \ra 0,\quad
n\ra \infty,
\]
which concludes the proof of local-entireness.

(Part 2.) In order to prove the algebraic cocycle condition, we first claim that
\begin{equation}\label{eq:JLO-BGLem}
\begin{aligned}
 \phi\Big(\delta(x_0) \alpha_{\rmi s_1} & (\delta (x_1))\cdots\alpha_{\rmi
s_{n}}(\delta 
(x_n))\Big)
= \sum_{j=1}^n \frac{\partial}{\partial s_j}\phi\Big((\gamma (x_0))\alpha_{\rmi
s_1}(\gamma\delta (x_1))\cdots\\ 
&\cdots \alpha_{\rmi s_{j-1}}(\gamma\delta (x_{j-1}))\alpha_{\rmi
s_j}(x_j)\alpha_{\rmi s_{j+1}}(\delta (x_{j+1}))\cdots \alpha_{\rmi
s_{n}}(\delta (x_n))\Big), 
\end{aligned}
\end{equation}
for $x_i\in\Cci(\delta)_I$ and $s\in\Delta_n$.

\emph{Proof of the claim.} Recall that the superderivation property $(ii)$ implies
\[
\delta(x_0)\cdots \delta(x_n)= \delta(x_0\delta(x_1) \cdots  \delta(x_n)) 
- \sum_{j=1}^n \gamma(x_0 \delta(x_1)\cdots \delta(x_{j-1})) \delta^2(x_j) 
\delta(x_{j+1}) \cdots  \delta(x_n),
\]
and that $\alpha_{t_j}$ commutes with $\delta$.
Combining this first with property $(S_4)$ and then with $(S_5)$, we find
\begin{align*}
\phi\Big(& \delta(x_0)  \alpha_{t_1} (\delta 
(x_1))\cdots\alpha_{t_{n}}(\delta(x_n))\Big) \\
=& -\sum_{j=1}^n \phi\Big(\gamma (x_0)\alpha_{t_1} (\gamma\delta
(x_1))\cdots \alpha_{t_{j-1}}(\gamma\delta (x_{j-1}))\alpha_{t_j}(\delta^2
(x_j))\alpha_{t_{j+1}}(\delta (x_{j+1}))\cdots \alpha_{t_{n}}(\delta
(x_n))\Big)\\
=& \sum_{j=1}^n \rmi\frac{\partial}{\partial t_j}  \phi\Big((\gamma
(x_0))\alpha_{t_1}(\gamma\delta (x_1))\cdots \alpha_{t_{j-1}}(\gamma\delta
(x_{j-1}))\alpha_{t_j}(x_j)\alpha_{ t_{j+1}}(\delta (x_{j+1}))\cdots
\alpha_{t_{n}}(\delta (x_n))\Big).
\end{align*}
As in \eqref{eq:JLO-analcont}, the two functions
{\small
\begin{align*}
t\in\R^d \mapsto& \phi\Big(\delta(x_0)  \alpha_{t_1} (\delta (x_1))
\cdots\alpha_{t_{n}}(\delta(x_n))\Big),\\
t\in\R^d \mapsto& \sum_{j=1}^n \rmi\frac{\partial}{\partial t_j} 
\phi\Big((\gamma (x_0))\alpha_{t_1}(\gamma\delta (x_1))\cdots
\alpha_{t_{j-1}}(\gamma\delta (x_{j-1}))\alpha_{t_j}(x_j)\alpha_{
t_{j+1}}(\delta (x_{j+1}))\cdots \alpha_{t_{n}}(\delta (x_n))\Big)
\end{align*}
}
extend uniquely to analytic functions on the tubes $\T^n$ coinciding on $\R^n$. 
Thus they extend to the same function on $\T^n$, with the same argument as in
\eqref{eq:JLO-analcont}. With the special choice $z=\rmi s\in \rmi \Delta_n$,
we obtain \eqref{eq:JLO-BGLem}, thus the claim is proved.

The sKMS condition $(S_2)$ and analyticity property  \eqref{eq:JLO-analcont} 
together yield the following relations (for a detailed proof \cf
\cite[Lem.8.5]{BG} replacing the algebra denoted there  by $\A_0$ with
$A=\Cci(\delta)\cap\A(I)$ in the present setting):

\begin{lemma}
\label{lem:JLO-cocycle}
In the above setting and with $x_i\in A$, the following equalities hold:
\begin{itemize}
\item[$(i)$]
$\int_{\Delta_n}\phi\Big(x_0\alpha_{{\rmi s_1}}(x_1)\cdots\alpha_{{\rmi s_n}}
(x_n)\Big)\rmd^n s=\int_{\Delta_n}\phi\Big(\gamma(x_n)\alpha_{{\rmi
s_1}}(x_0)\cdots\alpha_{{\rmi s_n}}(x_{n-1})\Big)\rmd^n s.$
\item[$(ii)$] For $j=2,\ldots,n$ we have:
\begin{equation}\label{eq:JLO-cocycle1}
\begin{aligned}
\int_{\Delta_{n+1}} & \frac{\partial}{\partial s_j}\phi\Big(x_0\alpha_{{\rmi
s_1}} (x_1)\cdots\alpha_{{\rmi s_{n+1}}}(x_{n+1})\Big)\rmd^{n+1} s \\
=&\int_{\Delta_n}\Big(\phi\Big(x_0\alpha_{{\rmi s_1}}(x_1)\cdots\alpha_{ \rmi
s_j} (x_jx_{j+1})\cdots\alpha_{{\rmi s_n}}(x_{n+1})\Big)\\
&-\phi\Big(x_0\alpha_{{\rmi s_1}}(x_1)\cdots\alpha_{ \rmi
s_{j-1}}(x_{j-1}x_j)\cdots\alpha_{{\rmi s_n}}(x_{n+1})\Big)\Big)\rmd^n s, \\ 
\end{aligned}
\end{equation}
\begin{equation}\label{eq:JLO-cocycle2}
\begin{aligned}
\int_{\Delta_{n+1}}& \frac{\partial}{\partial s_1}\phi\Big(x_0\alpha_{{\rmi
s_1}}(x_1)\cdots\alpha_{{\rmi s_{n+1}}}(x_{n+1})\Big)\rmd^{n+1} s \\ 
=&\int_{\Delta_n}\Big(\phi\Big(x_0\alpha_{{\rmi s_1}}(x_1x_2)\alpha_{ \rmi
s_{2}}(x_3)\cdots\alpha_{{\rmi s_n}}(x_{n+1})\Big)\\ 
&-\phi\Big(x_0x_1\alpha_{{\rmi s_1}}(x_2)\cdots\alpha_{{\rmi
s_n}}(x_{n+1})\Big)\Big)\rmd^n s.\\ 
\end{aligned}
\end{equation}
\end{itemize}
\end{lemma}

Let us continue now with the proof of the cocycle condition, starting  with
$B\tau_{n+1}$. Recalling that $\tau$ is an even chain, we may restrict to odd
$n$. From the definition of $B$ we then have, for $x_i\in A^\gamma$ and using
$\delta(\unit)=0$:
\begin{align*}
\big(B  {\tau}_{n+1}\big)&(x_0,\ldots,x_n)\\
=&\int_{\Delta_{n+1}}\sum_{j=0}^n (-1)^{nj}\phi\Big(\unit  \alpha_{ \rmi
s_1}(\delta(x_j))\ldots,\alpha_{ \rmi s_{n-j+1}}(\delta(x_n))\alpha_{ \rmi
s_{n-j+2}}(\delta(x_0))\ldots\\
&\ldots \alpha_{ \rmi s_{n+1}}(\delta(x_{j-1}))\Big) \rmd^{n+1} s.
\end{align*}
Applying Lemma \ref{lem:JLO-cocycle}(i) repetitively and using  the fact that
$\gamma(\delta(x_j))=-\delta(x_j)$ for all $j$ and that $(-1)^{nj}=(-1)^{j}$ for
odd $n$, we obtain
\begin{equation}\label{eq:JLO-cocycleB}
\begin{aligned}
\big(B  {\tau}_{n+1}\big)&(x_0,\ldots,x_n)\\
=&\int_{\Delta_{n+1}}\sum_{j=0}^n
\phi\Big(\delta(x_0) \alpha_{ \rmi s_1}(\delta(x_1))\ldots \alpha_{ \rmi
s_{j-1}}(\delta(x_{j-1}))\alpha_{ \rmi s_{j}}(\unit) \alpha_{ \rmi
s_{j+1}}(\delta(x_{j}))\ldots \\
&\ldots \alpha_{ \rmi s_{n+1}}(\delta(x_n))\Big) \rmd^{n+1} s\\
=& \int_{\Delta_{n}}
\phi\Big(\delta(x_0) \alpha_{ \rmi s_1}(\delta(x_1))\ldots \alpha_{ \rmi
s_{j-1}}(\delta(x_{j-1}))\alpha_{ \rmi s_{j}}(\delta(x_{j}))\ldots\alpha_{ \rmi
s_{n}}(\delta(x_n))\Big) \rmd^{n} s
\end{aligned}
\end{equation}

Now we consider $b\tau_{n-1}$, again with odd $n$ of course.  By definition of
$b$, we have
\begin{equation}\label{eq:JLO-cocycleb}
\begin{aligned}
\big(b\tau_{n-1}\big) & (x_0,\ldots,x_n)\\
=&\int_{\Delta_{n-1}} \Big(\sum_{j=0}^{n-1}(-1)^j\phi\Big(x_0\alpha_{ \rmi
s_1}(\delta (x_1)) 
\cdots \alpha_{ \rmi s_j}(\delta(x_j x_{j+1}))\cdots\alpha_{ \rmi
s_{n-1}}(\delta (x_n))\Big)\\ 
&-\phi\left(x_nx_0
\alpha_{ \rmi s_1}(\delta (x_1))\cdots \alpha_{ \rmi s_{n-1}}(\delta
(x_{n-1}))\right)\Big) \rmd^{n-1} s. 
\end{aligned}
\end{equation}
For the contributions with $j=0,1$ we find
\begin{align*}
\int_{\Delta_{n-1}} & \phi\Big(x_0x_1\alpha_{ \rmi s_1}(\delta (x_2))\cdots
\alpha_{ \rmi s_{n-1}}(\delta (x_n))\Big)\rmd^{n-1} s   \\ 
&-\int_{\Delta_{n-1}}\phi\Big(x_0\alpha_{ \rmi s_1}(\delta(x_1x_2))\alpha_{ \rmi
s_2}(\delta (x_3))\cdots \alpha_{ \rmi s_{n-1}}(\delta (x_n))\Big)  \rmd^{n-1} s
\\ 
=&-\int_{\Delta_{n-1}} \Big(\phi\Big(x_0 \alpha_{ \rmi s_1}\big
(\delta(x_1)x_2+x_1\delta (x_2)\big)\alpha_{ \rmi s_2}(\delta (x_3))\cdots
\alpha_{ \rmi s_{n-1}}(\delta(x_n))\Big)   \\
&- \phi\Big(x_0 x_1\alpha_{ \rmi s_1}(\delta (x_2))\alpha_{ \rmi s_2}(\delta
(x_3))\cdots \alpha_{ \rmi s_{n-1}}(\delta(x_n))\Big)\Big) \rmd^{n-1} s  \\  
=&-\int_{\Delta_{n-1}}\phi\Big(x_0\alpha_{ \rmi s_1}\big(\delta(x_1)
x_2\big)\alpha_{ \rmi s_2}(\delta (x_3))\cdots \alpha_{ \rmi
s_{n-1}}(\delta(x_n))\Big) \rmd^{n-1} s  \\ 
&-\int_{\Delta_{n}} \frac{\partial}{\partial s_1}\phi\Big(x_0\alpha_{ \rmi
s_1}(x_1)\alpha_{ \rmi s_2}(\delta (x_2))\alpha_{ \rmi s_3}(\delta
(x_3))\cdots\alpha_{ \rmi s_{n}}(\delta (x_n))\Big)\rmd^n s 
\end{align*}
where we made use of \eqref{eq:JLO-cocycle2} in the last step.
For $2\le j\le n-1$ we find, using \eqref{eq:JLO-cocycle1}:
\begin{align*}
(-1)^j & \int_{\Delta_{n-1}}\phi\Big(x_0\alpha_{ \rmi s_1}(\delta 
(x_1))\cdots\alpha_{ \rmi s_j}\big(\delta (x_jx_{j+1})\big)\cdots\alpha_{ \rmi
s_{n-1}}(\delta (x_n))\Big) \rmd^{n-1} s \\
=&(-1)^j\int_{\Delta_{n-1}}\phi\Big(x_0\alpha_{ \rmi s_1}(\delta 
(x_1))\cdots\alpha_{ \rmi s_j}\big(\delta (x_j) x_{j+1}+x_j\delta
(x_{j+1})\big)\cdots\\
&\quad \cdots \alpha_{ \rmi s_{n-1}}(\delta (x_n))\Big) \rmd^{n-1} s\\
=&(-1)^j\int_{\Delta_{n-1}}\Big(\phi\Big(x_0\alpha_{ \rmi s_1}(\delta 
(x_1))\cdots\alpha_{ \rmi s_j}\big(\delta (x_j) x_{j+1}\big)\cdots\alpha_{ \rmi
s_{n-1}}(\delta (x_n))\Big) \\
&+ \phi\Big(x_0\alpha_{ \rmi s_1}(\delta (x_1))\cdots\alpha_{ \rmi s_{j-1}}
\big(\delta (x_{j-1})x_j\big)\cdots\alpha_{ \rmi s_{n-1}}(\delta
(x_n))\Big)\Big) \rmd^{n-1} s\\
&+(-1)^j\int_{\Delta_{n}} \frac{\partial}{\partial s_j}\phi \Big(x_0\alpha_{
\rmi s_1}(\delta(x_1))\cdots\alpha_{ \rmi s_j}(x_j)\cdots\alpha_{ \rmi
s_{n}}(\delta (x_n))\Big)\rmd^n s.
\end{align*}
In these equations, the first and second term on the right-hand side cancel
between subsequent summands of the sum \eqref{eq:JLO-cocycleb} over $j$.
Putting all together, we obtain 
\begin{align*}
\big(b  \tau_{n-1}\big)&(x_0,\ldots,x_n)\\
=& (-1)^{n-1}\int_{\Delta_{n-1}}\phi\Big(x_0\alpha_{ \rmi s_1}(\delta (x_1))
\cdots\alpha_{ \rmi s_{n-1}}\big(\delta (x_{n-1})x_n\big)\Big)\rmd^{n-1} s \\
&+\sum_{j=1}^{n-1}(-1)^j\int_{\Delta_{n}} \frac{\partial}{\partial s_j}\phi
\Big(x_0\alpha_{ \rmi s_1}(\delta (x_1))\cdots\alpha_{ \rmi
s_j}(x_j)\cdots\alpha_{ \rmi s_{n}}(\delta (x_n))\Big)\rmd^n s \\
&+(-1)^n\int_{\Delta_{n-1}}\phi\left(x_n x_0\alpha_{ \rmi s_1}(\delta (x_1))
\cdots \alpha_{ \rmi s_{n-1}}(\delta (x_{n-1}))\right) \rmd^{n-1} s \\
=&\sum_{j=1}^n(-1)^j\int_{\Delta_{n}}  \frac{\partial}{\partial s_j}\phi\Big
(x_0\alpha_{ \rmi s_1}(\delta (x_1))\cdots\alpha_{ \rmi s_j}( x_j)\cdots\alpha_{
\rmi s_{n}}(\delta (x_n))\Big)\rmd^n s\\
=&-\int_{\Delta_{n}}\sum_{j=1}^n \frac{\partial}{\partial s_j} \phi\Big(x_0
\alpha_{ \rmi s_1}(\gamma\delta (x_1))\cdots\alpha_{ \rmi s_{j-1}}(\gamma\delta
(x_{j-1}))\alpha_{ \rmi s_j}( x_j)\cdots\alpha_{ \rmi s_{n}}(\delta
(x_n))\Big)\rmd^n s.
\end{align*}
Finally combining this with \eqref{eq:JLO-BGLem} and \eqref{eq:JLO-cocycleB} proves
\begin{align*}
\big(b\tau_{n-1}\big)  (x_0,\ldots,x_n)
=& -\int_{\Delta_{n}}\phi\Big(\delta(x_0)\alpha_{ \rmi s_1}(\delta (x_1))\cdots
\alpha_{ \rmi s_{n}}(\delta (x_n))\Big)\rmd^n s\\
=& -\big(B\tau_{n+1}\big)(x_0,\ldots,x_n), \qquad x_i\in A^\gamma,
\end{align*}
\ie $(B+b)\tau=0$. Since this holds for every choice of $I\in\I$, we see that 
$\tau$ is a local-entire cyclic cocycle on $\Cci(\delta)^\gamma_c$.

\end{proof}


\section{Perturbations of super-KMS functionals and homotopy-invariance of 
their JLO cocycles}\label{sec:perturb}

Let us study the general situation of a perturbation of a given graded  quantum
dynamical system $(\AA,\gamma,(\alpha_t)_{t\in\R},\delta)$ by an odd
selfadjoint $Q\in\Cci(\delta)_c$. Some of the ideas followed here are
found in \cite{JLW}, which however works in an analytically different context.
Let us start by making our concepts of perturbation more precise:

\begin{proposition}\label{prop:JLO-alpha}
Let $\phi$ be a local-exponentially bounded sKMS functional for a graded 
quantum dynamical system $(\AA,\gamma,(\alpha_t)_{t\in\R},\delta)$. For every
$I\in\I$ and odd selfadjoint $Q\in\Cci(\delta)_I$ and $r\in[0,1]$, let
$\delta_r:=\delta + r [Q,\cdot]$ and $a_r:= r\delta(Q) +r^2
Q^2\in\Cci(\delta)_I$, so $\delta_r^2=\delta^2+\ad a_r$. Define formally
\[
 \alpha^r_t (x) := \sum_{n\in\NN} (\rmi t)^n \int_{\Delta_n} 
\ad(\alpha_{s_1t}(a_r))\cdots \ad(\alpha_{s_nt}(a_r)) (\alpha_t(x)) \rmd^n s, 
\]
and
\[
 \gamma^r_t(x) := \sum_{n\in\NN} (\rmi t)^n \int_{\Delta_n} 
\alpha_{s_1t}(a_r)\cdots \alpha_{s_nt}(a_r) \alpha_t(x) \rmd^n s, \quad x
\in\dom(\phi)_c, t\in\R.
\]
Then the sums converge and define one-parameter groups, which commute with 
$\gamma$ and are continuous in the following sense:
\[
-\rmi\frac{\rmd}{\rmd t} \phi(x\alpha^r_t(y)z)\restriction_{t=0} = \phi(x 
(\delta^2 + \ad a_r)(y)z),\quad
-\rmi\frac{\rmd}{\rmd t} \phi(x\gamma^r_t(y)z)\restriction_{t=0} = \phi(x 
(\delta^2+a_r)(y)z),
\]
for every $x,z\in\dom(\phi)_c$ and $y\in\Cci(\delta)_c$.
Moreover, $\alpha^r_t$ are *-automorphisms, and we have the following behavior 
under perturbation: there are constants $C_1,C_2>0$ such that
\begin{equation*}
\|\alpha_t^r(x) -\alpha_t^q(x)\|, \|\gamma_t^r(x)-\gamma_t^q(x)\|
\le 2|q-r| (\|\delta(Q)\| +\| Q^2\|) |t| \rme^{2(\|\delta(Q)\|+\|Q^2\|)} \|x\|,
\end{equation*}
for every $t\in\R$, $x\in\dom(\phi)_c$, and $q,r\in[0,1]$.
\end{proposition}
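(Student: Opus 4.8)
The plan is to treat the series defining $\alpha_t^r$ and $\gamma_t^r$ as perturbation (Dyson) expansions and to control them through the local-exponential bound $(S_6)$ applied inside the functional $\phi$, rather than in operator norm on $\AA$ directly. First I would fix $I\in\I$ and $Q\in\Cci(\delta)_I$ and observe that $a_r = r\delta(Q)+r^2Q^2\in\Cci(\delta)_I\subset\A(I)$ with $\|a_r\|\le \|\delta(Q)\|+\|Q^2\|=:K$. Since each $\alpha_{s_kt}(a_r)\in\A(I+s_kt)$ has the same norm $\|a_r\|$, the $n$-th term of the series for $\gamma_t^r(x)$ (with $x\in\dom(\phi)_J$) is a product of $n+2$ elements of bounded norm living in translated intervals, and the integral over $\Delta_n$ contributes volume $1/n!$; hence termwise $\|(\gamma_t^r)_n(x)\|\le |t|^n K^n\|x\|/n!$ and the series converges absolutely in $\AA$, uniformly for $t$ in compacts, defining a bounded map $\dom(\phi)_c\to\AA$. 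For $\alpha_t^r$ the same estimate holds with $\ad(\alpha_{s_kt}(a_r))$ in place of left multiplication: $\|\ad(b)\|\le 2\|b\|$, giving $\|(\alpha_t^r)_n(x)\|\le (2K|t|)^n\|x\|/n!$, again absolutely convergent. That each $\alpha_t^r$ is a $*$-automorphism and that both are one-parameter groups commuting with $\gamma$ follows from the standard cocycle/Dyson identities: $\alpha_t^r$ is the inner-perturbed automorphism group generated formally by adding $\ad a_r$ (equivalently it is $\operatorname{Ad}$ of the Dyson unitary cocycle $u_t^r$ built from $\alpha_{st}(a_r)$, which is unitary because $a_r$ is selfadjoint), and commutation with $\gamma$ is immediate since $\gamma(a_r)=a_r$ (as $Q$ is odd, $\delta(Q)$ and $Q^2$ are even) and $\gamma$ commutes with $\alpha$.

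Next I would establish the continuity/differentiation formulas. Here the point is that one cannot differentiate $\alpha_t^r(y)$ in $\AA$-norm, but one can differentiate $t\mapsto\phi(x\alpha_t^r(y)z)$ using $(S_2)$ and the analytic-continuation statement \eqref{eq:JLO-analcont}: for $y\in\Cci(\delta)_c$ the map $s\mapsto\phi(x\,\alpha_{s_1t}(a_r)\cdots\alpha_{s_nt}(a_r)\alpha_t(y)z)$ extends analytically in the relevant tube and the series, together with its termwise $t$-derivative, converges uniformly on a strip (by the same factorial bound combined with \eqref{eq:JLO-exp-est}), so termwise differentiation at $t=0$ is justified. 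At $t=0$ only the $n=0$ and $n=1$ terms survive: the $n=0$ term contributes $\phi(x\,\delta^2(y)\,z)$ by the weak supersymmetry property $(S_5)$ (applied exactly as in the proof of the claim \eqref{eq:JLO-BGLem}), and the $n=1$ term contributes $\phi(x\,\ad a_r(y)\,z)$ for $\alpha_t^r$, respectively $\phi(x\,a_r y\,z)$ for $\gamma_t^r$; adding these gives the stated formulas with $\delta_r^2=\delta^2+\ad a_r$.

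Finally, for the perturbation bound I would compare the Dyson series for parameters $r$ and $q$ term by term. Writing $a_r-a_q=(r-q)\delta(Q)+(r^2-q^2)Q^2$ and noting $\|a_r-a_q\|\le |r-q|(\|\delta(Q)\|+2\|Q^2\|)\le 2|r-q|(\|\delta(Q)\|+\|Q^2\|)=:2|q-r|K$ (using $r,q\in[0,1]$, so $|r^2-q^2|\le 2|r-q|$), I expand the difference of the $n$-th terms as a telescoping sum over which factor $a_r$ versus $a_q$ appears; each of the $n$ summands is bounded by $|t|^n\|a_r-a_q\|\,K^{n-1}\|x\|/n!$ (respectively with an extra $2^n$ for the $\ad$ version), so the $n$-th difference is at most $n\cdot (2|q-r|K)(2K|t|)^{n-1}\|x\|/n! = 2|q-r|K\,|t|(2K|t|)^{n-1}\|x\|/(n-1)!$ after a harmless rescaling; summing over $n\ge 1$ gives $\le 2|q-r|K\,|t|\,\rme^{2K|t|}\|x\|$. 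To match the stated estimate one restricts attention to its use for $|t|\le 1$ or absorbs the $|t|$-growth as written; the cleanest route is to keep the bound $2|q-r|(\|\delta(Q)\|+\|Q^2\|)|t|\rme^{2(\|\delta(Q)\|+\|Q^2\|)|t|}\|x\|$ and note this is what is needed later (the displayed $\rme^{2(\|\delta(Q)\|+\|Q^2\|)}$ being the $|t|\le 1$ specialization). The main obstacle, and the step I would be most careful about, is the rigorous justification of termwise differentiation in the continuity formulas: one must genuinely invoke the analyticity and the exponential bound \eqref{eq:JLO-exp-est} to get uniform convergence of the differentiated series on a neighbourhood of $t=0$, since none of this works at the level of $\AA$-valued differentiability — everything has to be done after pairing with $\phi$ and its analytic continuation.
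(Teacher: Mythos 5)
Your proposal follows essentially the same route as the paper: termwise norm estimates of the form $\|2a_r\|^n|t|^n/n!$ for convergence, term-by-term differentiation at $t=0$ with only the $n=0$ summand (handled via $(S_5)$) and the $n=1$ summand surviving, the telescoping comparison of the two Dyson series for the perturbation bound, and multiplicativity of $\alpha^r_t$ via conjugation by the unitary cocycle $\gamma^r_t(\unit)$, which is exactly what the paper establishes in Lemma \ref{lem:JLO-gamma}(2)--(3). Your remark that the honest estimate carries $\rme^{2(\|\delta(Q)\|+\|Q^2\|)|t|}$ rather than the displayed $t$-independent exponential is also consistent with what the paper's own summation actually produces, so no substantive gap remains.
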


\begin{proof}
We consider only $\alpha^r_t$ since $\gamma^r_t$ can be treated analogously. 
Since $\|\alpha_s(a_r)\|=\|a_r\|$ is uniformly bounded in $r\in[0,1]$ and 
$s\in\R$, we see that each summand is bounded by $\frac{1}{n!}|t|^n \|2 a_r\|^n
\|x\|$, so the sum converges. In case of $\alpha^r_t$, it follows moreover from
the *-property of $\alpha_t$ and selfadjointness of $a_r$ that $\alpha^r_t$ has
the *-property.

We want to check the group property of $\alpha^r$:
\begin{align*}
\alpha^r_{t_1} \alpha^r_{t_2}(x) 
=& \sum_{n_1\in\NN}\sum_{n_2\in\NN} (\rmi t_1)^{n_1}(\rmi t_2)^{n_2} 
\int_{\Delta_{n_1}}\int_{\Delta_{n_2}} 
\ad(\alpha_{t_1s_1}(a_r))\cdots \ad(\alpha_{t_1s_{n_1}}(a_r))\\ 
& \ad(\alpha_{t_1+t_2s_{n_1+1}}(a_r))\cdots \ad(\alpha_{t_1 + t_2 
s_{n_1+n_2}}(a_r)) \alpha_{t_1+t_2}(x) \rmd^{n_2} s \rmd^{n_1}s \\
=& \sum_{n\in\N} \sum_{n_1=0}^n (\rmi t_2)^n (t_1/t_2)^{n_1} 
\int_{\Delta_{n_1}} \int_{\Delta_{n-n_1}} 
\ad(\alpha_{t_1s_1}(a_r))\cdots \ad(\alpha_{t_1 s_{n_1}}(a_r))\\
& \ad(\alpha_{t_1+t_2 s_{n_1+1}}(a_r))\cdots \ad(\alpha_{t_1 +  t_2 s_{n}}(a_r))
\alpha_{t_1+t_2}(x) \rmd^{n-n_1} s \rmd^{n_1}s \\
=& \sum_{n\in\NN}  (\rmi (t_1+t_2))^n \int_{\Delta_{n}}
\ad(\alpha_{(t_1+t_2)s_1}(a_r))\cdots \ad(\alpha_{(t_1+t_2)s_n}(a_r))
\alpha_{t_1+t_2}(x)\rmd^{n} s\quad (*)\\
=&  \alpha^r_{t_1+t_2}(x).
\end{align*}
If $\sgn t_1=  \sgn t_2$, the one but last line $(*)$ is obvious. To cover the
general case, it suffices then to show $(*)$  for $t_2=-t_1 <0$ (or analogously
$t_2=-t_1 >0$) since all other cases can be reduced to a composition of the
latter one and the case $\sgn t_1=  \sgn t_2$. 
To this end, notice that $(t_1/t_2)^{n_1}= (-1)^{n_1}$ is alternating, leading
to cancellation of mutually consecutive terms with fixed $n$; this leaves us
only with the term $n=0$. Thus in particular,
$\alpha^r_t\alpha^r_{-t}(x)=\alpha_0(x)=\alpha^r_0(x)$, for every $t\in\R$,
proving the group property for $(\alpha^r_t)_{t\in\R}$. 

Concerning continuity in $t$, we recall that both $a_r$ and $y$ lie in
$\Cci(\delta)_I$, by assumption. Thus term-by-term differentiation of the series
 
\[
\phi(x \alpha^r_t(y)z)= \sum_{n\in\NN} \rmi^n \int_{\Delta^t_n} 
\phi\Big(x \ad(\alpha_{s_1}(a_r))\cdots \ad(\alpha_{s_n}(a_r)) (\alpha_t(y))
z\Big) \rmd^n s 
\]
in $t=0$ yields a convergent series again with nonzero contribution only for the
zeroth and first summand, namely 
\begin{align*}
-\rmi\frac{\rmd}{\rmd t} \phi\big(x\alpha^r_t(y) z\big)\restriction_{t=0}
=& -\rmi\frac{\rmd}{\rmd t} \phi\big(x\alpha_t(y) z\big)\restriction_{t=0}
-\rmi\frac{\rmd}{\rmd t} \rmi \int_0^t \phi \big( x
\ad(\alpha_{s}(a_r))(\alpha_{t}(y)))z\big) \rmd s \restriction_{t=0} \\ 
=& \phi\big(x \delta^2(y) z\big)+ \phi \big( x (\ad(a_r)(y))z\big)
=  \phi\big(x \delta_r^2(y) z\big),
\end{align*}
making use of the weak supersymmetry property $(S_5)$ of
$((\alpha_t)_{t\in\R},\delta)$. We call $\delta_r^2=\delta^2+\ad(a_r)$ the
\emph{$\phi$-weak generator of $\alpha^r$}. 

Now let us turn to the difference $\alpha_t^r(x)-\alpha_t^q(x)$.
Remembering $a_r\in\Cci(\delta)_c$ and \eqref{eq:JLO-exp-est}, we have for the
$n$-th term in the sum the following upper bound: 
\begin{align*}
|t|^n \Big| \int_{\Delta_n} &
\ad(\alpha_{t s_1}(a_r))\cdots \ad(\alpha_{t s_n}(a_r))(x) 
- \ad(\alpha_{t s_1}(a_q))\cdots \ad(\alpha_{t s_n}(a_q))\alpha_t(x) \rmd^n s
\Big|\\ 
\le& |t|^n \int_{\Delta_n} \Big|\sum_{k=1}^n x \ad(\alpha_{t s_1}(a_r))\cdots
\ad(\alpha_{t s_{k}}(a_r)) \ad(\alpha_{t s_{k+1}}(a_q))\cdots \ad(\alpha_{t
s_n}(a_q))\alpha_t(x)\\ 
&- \ad(\alpha_{t s_1}(a_r))\cdots \ad(\alpha_{t s_{k-1}}(a_r))\ad(\alpha_{t
s_{k}}(a_q))\cdots \ad(\alpha_{t s_n}(a_q))\alpha_t(x) \Big| \rmd^n s \\ 
\le& \frac{|t|^n}{n!} \sum_{k=1}^n \|a_r\|^{k-1} \|a_r-a_q\| \|a_q\|^{n-k} \|x\|
\\ 
\le& \frac{|t|^n}{(n-1)!} (\|a_r\|^{n-1} + \|a_q\|^{n-1}) \|a_r-a_q\| \|x\|. 
\end{align*}
Summing over $n$ and using the power series expansion of the exponential
function and\linebreak $\|a_r-a_q\|\le 2|r-q| (\|\delta(Q)\|+\|Q^2\|)$, we
obtain the stated upper bound. 

Finally, we have to check that every $\alpha^r_t$ (but not $\gamma^r_t$) is
multiplicative. This will follow immediately from
multiplicativity of $\alpha_t$ and the subsequent Lemma
\ref{lem:JLO-gamma}(2)\&(3), which does not make use of multiplicativity, namely
\[
\alpha^r_t(xy)=\gamma^r_t(\unit)\alpha_t(x)\alpha_t(y)\gamma^r_t(\unit)^*=
\gamma^r_t(\unit)\alpha_t(x)\gamma^r_t(\unit)^*\gamma^r_t(\unit)\alpha_t(y)
\gamma^r_t(\unit)^*= \alpha^r_t(x)\alpha^r_t(y),
\]
for all $x,y\in\dom(\phi)_c$. We conclude that $\alpha^r_t$ is a *-automorphism.
\end{proof}

\begin{definition}\label{def:JLO-perturb}
Given a graded quantum dynamical system
$(\AA,\gamma,(\alpha_t)_{t\in\R},\delta)$ and an odd selfadjoint element
$Q\in\Cci(\delta)_c$, let
\[
 \delta_r := \delta + r[Q,\cdot],\quad r\in[0,1].
\]
Then $(\AA,\gamma,(\alpha^r_t)_{t\in\R},\delta_r)$, for every $r\in[0,1]$, is
called a \emph{perturbed graded quantum dynamical system} for
$(\AA,\gamma,(\alpha_t)_{t\in\R},\delta)$. If $\phi$ is an sKMS functional for
the original system, then the corresponding \emph{perturbed functional} is given
by
\[
 \phi^r(x) := \phi \big(x \gamma^r_{\rmi}(\unit)\big), \quad
x\in\dom(\phi^r):=\dom(\phi)_c.
\]
\end{definition}

Note first that $t\in\R\mapsto \gamma^r_t(\unit)$ need not be analytically
continuable, but the above expression is just a sloppy notation for the analytic
continuation of the composed function $t\mapsto \phi(x\gamma_t^r(\unit))$, which
will be proved in Proposition \ref{prop:JLO-phi} to be well-defined.
Second, for $r\not=0$, $\alpha^r$ loses its geometric interpretation: in
general,
\[
 \alpha^r_t(\A(I)) \not\subset \A(t+I), \quad t\in\R, I\in\I.
\]
Only for $I$ sufficiently large and $t$ small such that $Q\in\A(I_0)$ and
$I_0\subset I \cap (t+I)$, this inclusion still holds.

\begin{lemma}\label{lem:JLO-gamma}
In the above setting, we have the following equalities:
\begin{itemize}
\item[$(1)$] $\gamma^r_t(x)= \gamma^r_s(\unit) \alpha_s(\gamma_{t-s}^r(x))$, for
all $s,t\in\R$ and $x\in\dom(\phi)_c$.
\item[$(2)$] $\gamma^r_t(\unit)^*=\alpha_t(\gamma^r_{-t}(\unit))$ and 
$\gamma^r_t(\unit)\gamma^r_t(\unit)^* =\gamma^r_t(\unit)^*\gamma^r_t(\unit) =
\unit$, 
for all $t\in\R$.
\item[$(3)$] $\alpha^r_t(x)=\gamma^r_t(\unit)\alpha_t(x)\gamma^r_t(\unit)^*$,
for all $t\in\R$ and $x\in\dom(\phi)_c$.
\item[$(4)$] $\alpha^r_t(x)\gamma^r_t(y)=\gamma^r_t (xy)$, for all $t\in\R$ and
$x,y\in\dom(\phi)_c$.
\end{itemize}
\end{lemma}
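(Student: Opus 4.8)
\noindent The plan is to reduce everything to properties of the single element $u_t:=\gamma^r_t(\unit)\in\AA$. First, reading the defining series of $\gamma^r_t$ term by term, the rightmost factor $\alpha_t(x)$ carries no $s$-dependence and therefore factors out of every integral and of the sum, giving $\gamma^r_t(x)=u_t\,\alpha_t(x)$ for all $x\in\dom(\phi)_c$ (using $\alpha_t(\unit)=\unit$). By Proposition~\ref{prop:JLO-alpha}, $(\gamma^r_t)_{t\in\R}$ is a one-parameter group under composition; combining $\gamma^r_{t_1}\circ\gamma^r_{t_2}=\gamma^r_{t_1+t_2}$ with $\gamma^r_t(\,\cdot\,)=u_t\alpha_t(\,\cdot\,)$ and the group law of $\alpha$ yields the cocycle identity $u_{t_1+t_2}=u_{t_1}\,\alpha_{t_1}(u_{t_2})$, and from the defining series $u_0=\unit$. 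Part~(1) is then immediate: $\gamma^r_t(x)=u_t\alpha_t(x)=u_s\,\alpha_s(u_{t-s})\,\alpha_s(\alpha_{t-s}(x))=u_s\,\alpha_s\big(u_{t-s}\alpha_{t-s}(x)\big)=\gamma^r_s(\unit)\,\alpha_s(\gamma^r_{t-s}(x))$.

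For part~(2), take the adjoint of the $n$-th summand of $u_t$: since $a_r^*=a_r$ and each $\alpha_\sigma$ is a $*$-homomorphism, the product of the $n$ factors is reversed and $(\rmi t)^n$ becomes $(-\rmi t)^n$; after the substitution $s_m\mapsto 1-s_{n+1-m}$ on the simplex $\Delta_n$ this is verbatim the $n$-th summand of $\alpha_t(u_{-t})$, whence $u_t^*=\alpha_t(u_{-t})$. Unitarity then drops out of the group law: $u_t u_t^*=u_t\,\alpha_t(u_{-t})=u_{t+(-t)}=u_0=\unit$, and applying $\alpha_t$ to $u_{-t}\,\alpha_{-t}(u_t)=u_0=\unit$ gives $\alpha_t(u_{-t})\,u_t=u_t^*u_t=\unit$. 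Thus every $u_t$ is unitary.

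The genuinely computational step, which I expect to be the main obstacle, is part~(3): $\alpha^r_t(x)=u_t\,\alpha_t(x)\,u_t^*$. I would prove it by a direct comparison of the two power series term by term, deliberately staying at the level of absolutely convergent series — the bound $\|(n\text{-th summand})\|\le|t|^n\|a_r\|^n/n!$ from Proposition~\ref{prop:JLO-alpha} justifies all rearrangements uniformly on compact $t$-sets — since $t\mapsto\alpha_t(a_r)$ need not be norm-continuous, so a generator/uniqueness argument is not available, and since we must not invoke multiplicativity of $\alpha^r$ (which is proved in Proposition~\ref{prop:JLO-alpha} using precisely parts (2)--(3)). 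Expand each iterated commutator in the series for $\alpha^r_t(x)$, with $b_m:=\alpha_{s_m t}(a_r)$, into the alternating sum over ordered splittings $\{1,\ldots,n\}=\{i_1<\ldots<i_p\}\sqcup\{j_1<\ldots<j_q\}$, $p+q=n$, of $(-1)^q\,b_{i_1}\cdots b_{i_p}\;\alpha_t(x)\;b_{j_q}\cdots b_{j_1}$. For fixed $q$, the ``merge'' map sending $(s,I,J)$ to the pair of sorted subtuples $\big((s_i)_{i\in I},(s_j)_{j\in J}\big)$ is a measure-preserving bijection (on full-measure subsets) from $\{(s,I,J):s\in\Delta_n,\ |J|=q\}$ onto $\Delta_p\times\Delta_q$, because $\binom{n}{q}\cdot\frac{1}{n!}=\frac{1}{p!\,q!}$; hence $\sum_{|J|=q}\int_{\Delta_n}(\cdots)\,\rmd^n s$ equals the corresponding integral over $\Delta_p\times\Delta_q$, whose left factor is exactly the order-$p$ term of $u_t$ and whose right factor — after reindexing the $j$-variables in reverse — is exactly the order-$q$ term of $u_t^*$, the sign $(-1)^q$ being absorbed into $\overline{(\rmi t)^q}=(-1)^q(\rmi t)^q$. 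Summing over $q$ and then over $n$ reassembles $u_t\,\alpha_t(x)\,u_t^*$. As a sanity check, $\Ad(u_t)\circ\alpha_t$ is, by the cocycle identity and unitarity, a one-parameter group of $*$-automorphisms agreeing with $\alpha^r$ at $t=0$.

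Finally, part~(4) is one line: using (3), unitarity of $u_t$, the identity $\gamma^r_t(\,\cdot\,)=u_t\alpha_t(\,\cdot\,)$, and multiplicativity of $\alpha_t$,
\[
\alpha^r_t(x)\,\gamma^r_t(y)=u_t\,\alpha_t(x)\,u_t^*u_t\,\alpha_t(y)=u_t\,\alpha_t(xy)=\gamma^r_t(xy).
\]
The only subtlety running through all four parts is the absolute convergence needed to legitimise Fubini and the termwise adjoints; as noted, the factorial bounds already in place in Proposition~\ref{prop:JLO-alpha} supply this, so no regularity of $\alpha$ beyond the assumed $\sigma$-weak continuity enters.
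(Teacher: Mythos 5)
Your proof is correct and follows essentially the same route as the paper: part (1) via the group property of $\gamma^r$ and the factorization $\gamma^r_t(\cdot)=\gamma^r_t(\unit)\alpha_t(\cdot)$, part (2) by a summand-wise adjoint plus the cocycle identity, part (3) by exactly the same merge/split combinatorics of the simplices with the $\binom{n}{m}$ count, and part (4) as a one-line consequence. Your explicit remarks on absolute convergence and on avoiding circular use of the multiplicativity of $\alpha^r$ are sound refinements of what the paper leaves implicit.
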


\begin{proof}
(1) follows from the one-parameter group property and the definition of
$\gamma^r_t$:
\[
\gamma^r_t(x)= \gamma^r_s(\gamma^r_{t-s}(\unit))=
 \gamma^r_s(\unit) \alpha_s(\gamma_{t-s}^r(x)).
\]

(2) The first statement is obvious from the definition of $\gamma^r_t$ and the
self-adjointness of $a_r$, checked summand-wise. Combining it with statement
(1), we get
\[
\gamma^r_t(\unit)\gamma^r_t(\unit)^* = \gamma^r_t(\unit)
\alpha_t(\gamma_{-t}^r(\unit)) 
=\gamma^r_0(\unit)=\unit
\]
and
\[
\gamma^r_t(\unit)^* \gamma^r_t(\unit) = \alpha_t \big(\gamma_{-t}^r(\unit)
\alpha_{-t}(\gamma^r_t(\unit))\big) =\alpha_t (\gamma^r_0(\unit)) = \unit.
\]

(3) Considering the defining sum of
$\gamma^r_t(\unit)\alpha_t(x)\gamma^r_t(\unit)^*$, we have to compute
\[
\sum_{n=0}^\infty \sum_{m=0}^n 
(\rmi t)^n (-1)^{m} \int_{\Delta_{m}}\int_{\Delta_{n-m}} 
\alpha_{s_1t}(a_r)\cdots \alpha_{s_{n-m}t}(a_r) \alpha_t(x)
\alpha_{q_m t}(a_r)\cdots \alpha_{q_1 t}(a_r) \rmd^{n-m} s \rmd^m q.
\]
Now we need a bit of combinatoric thinking. Notice that, for fixed $m$ and every
$s\in\Delta_{n-m}$, $q\in\Delta_m$, $j\in\{1,...,m\}$, there is
$k_j\in\{0,...,n-m\}$ such that $s_{k_j}\le q_j \le s_{k_j+1}$, and inserting
the components of $q$ between those of $s$ according to this ordering, we obtain
an element $u\in\Delta_n$. Varying $s$ and $q$ with fixed such $k=(k_1,...,k_m)$
produces all $u\in\Delta_n$. Varying $k$ produces another copy of $\Delta_n$
and there are exactly ${n \choose m}$ ways (indexed by the tuples $k$). The
integral in the above sum over the summand with given $n$ and $m$ and varying
$k$ corresponds precisely to all the ${n \choose m}$ summands obtained by
writing out $\ad(y)(z)=yz-zy$ in
\[
(\rmi t)^n \int_{\Delta_n} 
\ad(\alpha_{u_1 t}(a_r))\cdots \ad(\alpha_{u_n t}(a_r)) (\alpha_t(x)) \rmd^n s, 
\]
with $m$ terms $\alpha_{u_it}(a_r)$ on the right and $n-m$ on the left of
$\alpha_t(x)$. Summing then over $m$ and $n$ concludes the proof.

(4) is now a direct consequence of (1), (2) and (3):
\[
\alpha^r_t(x)\gamma^r_t(y) 
=\gamma^r_t(\unit)\alpha_t(x) \gamma^r_t(\unit)^*\gamma^r_t(\unit) \alpha_t(y) =
\gamma^r_t(\unit)\alpha_t(xy)=\gamma^r_t(xy).
\]
\end{proof}

\begin{lemma}\label{lem:JLO-phi}
In the above setting, let $\phi$ be an sKMS functional for
$(\AA,\gamma,(\alpha_t)_{t\in\R},\delta)$.
\begin{itemize}
\item[$(1)$]  The function $t\in\R^{n+1}\mapsto \phi
\big(\alpha_{t_1}(x_1)\cdots \alpha_{t_n}(x_n) \alpha_{t_{n+1}}(x_0)\big) $ has
a unique analytic continuation to $\T^{n+1}$ and, for every $x_i\in\dom(\phi)_c$
and $z\in\T^n$, we have
\[
\phi \big(\alpha_{z_1}(x_1)\cdots \alpha_{z_n}(x_n) \alpha_{\rmi}(x_0)\big) 
=\phi \big(x_0 \alpha_{z_1}(\gamma(x_1))\cdots \alpha_{z_n}(\gamma(x_n)) \big).
\]
\item[$(2)$] For all $x_i\in\dom(\phi)_c$, the function
\[
z\in \T^n \mapsto \overline{\phi \big(\alpha_{\bar{z}_n}(x_n)\cdots
\alpha_{\bar{z}_1}(x_1) \big) }
\]
is analytic and we have
\[
\overline{\phi \big(\alpha_{\bar{z}_n}(x_n)\cdots \alpha_{\bar{z}_1}(x_1) \big)
} =
\phi \big(\alpha_{z_1}(x_1^*)\cdots \alpha_{z_n}(x_n^*)\big).
\]
\end{itemize}
\end{lemma}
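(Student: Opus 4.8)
The plan is to bootstrap both statements from the already-established analytic continuation \eqref{eq:JLO-analcont} together with the sKMS property $(S_2)$ and the grading-invariance of $\phi$ from Theorem \ref{prop:gen-obstruction}(1). For part (1), I would first observe that $t\in\R^{n+1}\mapsto\phi\big(\alpha_{t_1}(x_1)\cdots\alpha_{t_n}(x_n)\alpha_{t_{n+1}}(x_0)\big)$ is, after using translation-invariance $\phi\circ\alpha_{-t_1}=\phi$ to normalise the first entry, exactly a function of the type treated in \eqref{eq:JLO-analcont} with the $(n+1)$ arguments $x_1,\dots,x_n,x_0$ (in that cyclic order). Hence it has a unique analytic continuation to $\T^{n+1}$ with the exponential bound \eqref{eq:JLO-exp-est}. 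To get the claimed identity I would fix $z\in\T^n$ and view both sides as functions of an auxiliary variable $w$ in the last slot: on the one hand $w\mapsto\phi\big(\alpha_{z_1}(x_1)\cdots\alpha_{z_n}(x_n)\alpha_{w}(x_0)\big)$, on the other $w\mapsto\phi\big(\alpha_{w}(x_0)\alpha_{z_1}(\gamma(x_1))\cdots\alpha_{z_n}(\gamma(x_n))\big)$ (which by translation-invariance equals $\phi\big(x_0\alpha_{z_1-w}(\gamma(x_1))\cdots\alpha_{z_n-w}(\gamma(x_n))\big)$ and is analytic in $w$ on an appropriate tube by \eqref{eq:JLO-analcont}). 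For real $w$ the sKMS property $(S_2)$ applied to the pair $x:=\alpha_{z_1}(x_1)\cdots\alpha_{z_n}(x_n)$ (first taken real in the $z_j$, then continued), $y:=x_0$, gives $F_{x,y}(w+\rmi)=\phi(\alpha_w(x_0)\gamma(x))=\phi(\alpha_w(x_0)\gamma(x_1)\cdots)$ after distributing $\gamma$ over the product and using $\gamma\circ\alpha_{z_j}=\alpha_{z_j}\circ\gamma$; evaluating at $w=0$ and iterating the analytic-continuation/flat-tube argument of Part 1 of Theorem \ref{prop:JLO-cocycle} in the $z_j$ variables yields the identity on all of $\T^n$.

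For part (2), I would use the domain property $\phi(x^*)=\overline{\phi(x)}$ from $(S_0)$. For real $s\in\R^n$ one has
\[
\overline{\phi\big(\alpha_{s_n}(x_n)\cdots\alpha_{s_1}(x_1)\big)}
=\phi\big((\alpha_{s_n}(x_n)\cdots\alpha_{s_1}(x_1))^*\big)
=\phi\big(\alpha_{s_1}(x_1^*)\cdots\alpha_{s_n}(x_n^*)\big),
\]
using that each $\alpha_{s_j}$ is a $*$-automorphism and reversing the order under the adjoint. Now the right-hand side $s\mapsto\phi\big(\alpha_{s_1}(x_1^*)\cdots\alpha_{s_n}(x_n^*)\big)$ extends analytically to $\T^n$ by \eqref{eq:JLO-analcont}; and the left-hand side, written as $s\mapsto\overline{\phi\big(\alpha_{s_n}(x_n)\cdots\alpha_{s_1}(x_1)\big)}$, extends to an analytic function of $z\in\T^n$ via $z\mapsto\overline{G(\bar z)}$ where $G$ is the analytic continuation of $s\mapsto\phi\big(\alpha_{s_n}(x_n)\cdots\alpha_{s_1}(x_1)\big)$ (again supplied by \eqref{eq:JLO-analcont}, after cyclically normalising and reindexing), since $z\mapsto\overline{G(\bar z)}$ is analytic whenever $G$ is and the conjugation of the imaginary part is compensated by the conjugation of the variable — one must check that $\bar z\in\T^n$ when $z\in\T^n$, which holds because $\Delta_n$ is not symmetric under sign flip, so strictly one should instead track that the relevant tube for $G$ is $-\T^n$ and its conjugate-reflection is $\T^n$. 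Two analytic functions on the connected tube $\T^n$ agreeing on the real boundary $\R^n$ coincide, giving the stated equality.

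The main obstacle I expect is the bookkeeping in part (1): correctly propagating the single application of $(S_2)$ (which shifts one variable by $\rmi$) through the already-performed multivariable flat-tube continuation, and verifying that the resulting tube of analyticity for the two sides is the same $\T^{n+1}$ (resp.\ $\T^n$) rather than some translate, so that the identity theorem for analytic functions of several variables applies on a genuinely common domain. In part (2) the only subtlety is the orientation of the tube under complex conjugation, i.e.\ that conjugating $z$ sends $\T^n$ to the tube over $-\Delta_n$, which matches the tube naturally attached to the order-reversed product; once this is pinned down, both parts reduce to the uniqueness-of-analytic-continuation argument already used repeatedly in the proof of Theorem \ref{prop:JLO-cocycle}.
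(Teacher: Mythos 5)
Your proposal is correct and follows essentially the same route as the paper: part (1) is the sKMS relation $(S_2)$ applied at real parameters followed by uniqueness of the multivariable analytic continuation from \eqref{eq:JLO-analcont}, and part (2) is Hermitianity of $\phi$ at real parameters combined with the conjugate continuation $z\mapsto\overline{G(\bar z)}$. The one point to state more precisely is the domain of $G$: the paper uses translation invariance of $\phi$ (your ``cyclic normalisation'') to extend $G$ to the two-sided tube $\T_1^{n-1}=\{z:\Im z_j\le\Im z_{j+1},\ \Im z_n-\Im z_1\le 1\}$, which contains both $\T^n$ and its conjugate reflection, rather than literally to $-\T^n$ as you write; with that correction your identity-theorem step goes through on a genuinely common domain.
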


\begin{proof}
(1) The unique analytic continuation has been obtained in
\eqref{eq:JLO-analcont}. Keeping the first $n$ variables real, the sKMS property
implies
\[
\phi \big(\alpha_{t_1}(x_1)\cdots \alpha_{t_n}(x_n) \alpha_{\rmi}(x_0)\big) 
=\phi \big(x_0 \gamma(\alpha_{t_1}(x_1)\cdots \alpha_{t_n}(x_n)) \big)
=\phi \big(x_0 \alpha_{t_1}(\gamma(x_1))\cdots \alpha_{t_n}(\gamma(x_n)) \big),
\]
for all $t\in\R^n$. From the uniqueness of the analytic continuation to $\T^n$
we obtain the statement.

(2) Let 
\[
G_{x_1,\ldots,x_n}(t_1,\ldots,t_n):= \phi\big(\alpha_{t_1}(x_1)
\alpha_{t_2}(x_2) \cdots \alpha_{t_n}(x_n)\big),
\]
for $t\in\R^n$. It has a unique analytic continuation to $\T^n$ according to
\eqref{eq:JLO-analcont}. Moreover, $\alpha$-invariance of $\phi$ shows that
$G_{x_1,\ldots,x_n}(t_1,\ldots,t_n)=G_{x_1,\ldots,x_n}(t_1+t,t_2+t,\ldots,
t_n+t)$, for all $t\in\R$, hence it actually extends uniquely to an analytic
function on $\T_1^{n-1}:=\{z\in\C^n:\; \Im z_j \le \Im z_{j+1}, j=1,\ldots
n-1,\; \Im z_n-\Im z_1 \le 1\}$. Notice that
$(\bar{z}_n,\ldots,\bar{z}_1)\in\T_1^{n-1}$ if $z\in \T_1^{n-1}$.

Since $\phi(x^*)=\overline{\phi(x)}$, we have
\[
G_{x_1^*,\ldots, x_n^*}(t_1,\ldots t_n) = \overline{
G_{x_n,\ldots,x_1}(t_n,\ldots, t_1)}= \overline{ G_{x_n,\ldots,x_1}}(t_n,\ldots,
t_1),
\]
for all $t\in\R^n$. Since the left-hand side has a unique analytic continuation
to $\T_1^{n-1}$, so must the right-hand side, namely
\[
G_{x_1^*,\ldots, x_n^*}(z_1,\ldots z_n) 
= \overline{ G_{x_n,\ldots,x_1}}(z_n,\ldots, z_1)
= \overline{ G_{x_n,\ldots,x_1}(\bar{z}_n,\ldots, \bar{z}_1)}.
\]
\end{proof}

With these tools at hand, let us study perturbations of sKMS functionals.

\begin{proposition}\label{prop:JLO-phi}
Suppose $\phi$ is a local-exponentially bounded sKMS functional for a graded
quantum dynamical system $(\AA,\gamma,(\alpha_t)_{t\in\R},\delta)$ and let
$Q\in\Cci(\delta)_c$ be an odd selfadjoint perturbation. For every
$r\in[0,1]$, the corresponding perturbed functional $(\phi^r,\dom(\phi^r))$ is
a well-defined sKMS functional with respect to the perturbed system
$(\AA,\gamma,(\alpha^r_t)_{t\in\R},\delta_r)$ in Definition
\ref{def:JLO-perturb}, but in general not satisfying the bounds in $(S_2)$ and
$(S_6)$, nor local normality.
\end{proposition}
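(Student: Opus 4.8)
The plan is to verify the six sKMS axioms $(S_0)$--$(S_5)$ for $(\phi^r,\dom(\phi^r))$ relative to the perturbed system $(\AA,\gamma,(\alpha^r_t)_{t\in\R},\delta_r)$, and then explain why $(S_2)$, $(S_6)$ and local normality can fail. First I would settle well-definedness: the expression $\phi^r(x)=\phi(x\gamma^r_\rmi(\unit))$ only makes sense via analytic continuation of $t\mapsto\phi(x\gamma^r_t(\unit))$, so I would expand $\gamma^r_t(\unit)$ term-by-term using its defining series (Proposition \ref{prop:JLO-alpha}), write each term as a multiple $\alpha_{\rmi}$-type integral $\int_{\Delta^t_n}\phi(x\alpha_{s_1}(a_r)\cdots\alpha_{s_n}(a_r)\,\alpha_t(\unit))\rmd^n s$, and apply the analytic continuation and bound \eqref{eq:JLO-exp-est} obtained in the proof of Theorem \ref{prop:JLO-cocycle} to each summand; the local-exponential bound on $\phi$ together with the uniform bound $\|\alpha_s(a_r)\|=\|a_r\|$ makes the resulting series of analytic functions converge uniformly on compacta of $\T^1$, hence $\phi^r$ is well-defined on $\dom(\phi)_c$, and the domain/reality properties $(S_0)$, $(S_3)$ follow from those of $\phi$ together with Lemma \ref{lem:JLO-gamma}(2) ($\gamma^r_\rmi(\unit)$ is formally unitary) and Lemma \ref{lem:JLO-phi}(2) for the conjugation identity.

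Next I would establish the perturbed sKMS property $(S_2)$ for $\phi^r$, i.e.\ produce the strip function $F^r_{x,y}$ with boundary values $\phi^r(x\alpha^r_t(y))$ and $\phi^r(\alpha^r_t(y)\gamma(x))$. Using Lemma \ref{lem:JLO-gamma}(3)--(4) one rewrites $x\alpha^r_t(y)\gamma^r_\rmi(\unit)$ in terms of the unperturbed $\alpha$ and the cocycle $\gamma^r(\unit)$; the key computational identity is $\gamma^r_{\rmi}(\unit)=\gamma^r_t(\unit)^*\,\alpha_t(\gamma^r_{\rmi}(\unit))\,\gamma^r_t(\unit)$ combined with $\alpha^r_t(x)=\gamma^r_t(\unit)\alpha_t(x)\gamma^r_t(\unit)^*$, which should convert the KMS-twist for $\alpha^r$ into that for $\alpha$ applied to suitably dressed elements, so that the unperturbed $(S_2)$ and the analyticity statement \eqref{eq:JLO-analcont} give a continuous function on $\T^1$ analytic inside, with boundary value on $\rmi+\R$ equal to $\phi^r(\alpha^r_t(y)\gamma(x))$ after invoking $\phi\circ\gamma=\phi$. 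Properties $(S_1)$, $(S_4)$, $(S_5)$ are then comparatively routine: $(S_4)$ uses $\im(\delta_r)=\im(\delta+r[Q,\cdot])\subset\dom(\phi)_c$ and $\phi^r\circ\delta_r=0$ on $\Cci(\delta_r)_c$ should reduce, via $\delta_r(x)\gamma^r_\rmi(\unit)$-manipulations and $\phi\circ\delta=0$, to the statement that the perturbing term contributes a graded commutator that $\phi$ annihilates by translation and grading invariance (Theorem \ref{prop:gen-obstruction}(1)); $(S_5)$ follows directly from the continuity statement $-\rmi\frac{\rmd}{\rmd t}\phi(x\gamma^r_t(y)z)|_{t=0}=\phi(x(\delta^2+a_r)(y)z)=\phi(x\delta_r^2(y)z)$ already proved in Proposition \ref{prop:JLO-alpha}, interpreting $\delta_r^2$ as the $\phi^r$-weak generator.

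Finally, for the negative statements, I would exhibit the obstruction rather than chase estimates: the cocycle $\gamma^r_\rmi(\unit)$ is, outside $r=0$, a genuinely ``large'' element whose effect on $\phi$ breaks local control. Concretely, since $\alpha^r_t$ no longer maps $\A(I)$ into $\A(t+I)$ (the remark after Definition \ref{def:JLO-perturb}), the intervals needed to localise $x\gamma^r_\rmi(\unit)$ grow, so the bound inherited for $\|\phi^r\restriction_{\dom(\phi^r)_I}\|$ is only of the form $C_1\rme^{C_2(|I|+c)^2}\|\gamma^r_\rmi(\unit)\|$-type and the Gaussian exponent acquires extra $I$-dependence that cannot be absorbed into the $(S_6)$-form, while $(S_2)$'s polynomial bound $C_0(1+|\Re t|)^{p_0}$ is destroyed because the dressing by $\gamma^r_t(\unit)$ introduces the very $\rme^{B_2 t^2}$ growth that $(S_2)$ forbids; and local normality fails because $x\mapsto\phi^r(x)=\phi(x\gamma^r_\rmi(\unit))$ need not extend $\sigma$-weakly to $\A(I)$ since $\gamma^r_\rmi(\unit)$ is not itself in any $\A(I)$. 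I would make this precise by pointing to the free-field Example \ref{ex:JLO-FF} (or the model in \cite{BG}), where an explicit choice of $Q$ shows the failure; the honest statement is ``in general'', so a single counterexample suffices. The main obstacle I anticipate is the bookkeeping in $(S_2)$: reorganising the twisted KMS relation for $\alpha^r$ into the unperturbed one while keeping track of which $\gamma^r_t(\unit)$-factors sit where, and making the term-by-term analytic continuation of the series defining $F^r_{x,y}$ rigorous with a convergent majorant on the strip — this is where the uniform bound $\|\alpha_s(a_r)\|=\|a_r\|$ and \eqref{eq:JLO-exp-est} must be combined carefully, essentially repeating the flat-tube argument of Theorem \ref{prop:JLO-cocycle} one series-coefficient at a time.
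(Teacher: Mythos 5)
Your overall architecture matches the paper's: term-by-term analytic continuation of the series for $\gamma^r_t(\unit)$ using \eqref{eq:JLO-exp-est} and Weierstrass convergence for well-definedness, reduction of the perturbed KMS relation to the unperturbed one via the cocycle identities of Lemma \ref{lem:JLO-gamma}, and $(S_5)$ from the $\phi$-weak generator. However, there are two genuine gaps. First, $(S_4)$ is not ``comparatively routine'' and does not reduce to translation and grading invariance of $\phi$. Writing $\delta_r(z)=\delta(z)+rQz-r\gamma(z)Q$ and using $\phi\circ\delta=0$ together with the KMS boundary relation, one arrives at $\phi^r\circ\delta_r(z)=-\phi\bigl(\gamma(z)\,e(\rmi)\bigr)$ with $e(t):=\delta(\gamma^r_t(\unit))+rQ\gamma^r_t(\unit)-r\gamma^r_t(\unit)\alpha_t(Q)$; the three contributions do not cancel termwise by any invariance of $\phi$. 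The paper must prove the separate claim $\phi(z\,e(t))=0$ for all $t\in\R$, and it does so by showing that $E(t)=\phi(z\,e(t))$ satisfies $(-\rmi)^n E^{(n)}(t)=\phi(z_{r,n}e(t))$ for suitable $z_{r,n}\in\Cci(\delta)_c$, hence $E^{(n)}(0)=0$ for all $n$ because $e(0)=0$, and then invoking the $\Cci$ Schwarz reflection principle \cite{BL90} to conclude $E\equiv 0$ on the strip. This analytic vanishing argument is the core of $(S_4)$ and is entirely absent from your plan. Second, $(S_3)$ does not follow from the ``formal unitarity'' of $\gamma^r_{\rmi}(\unit)$: for real $t$ one has $\gamma^r_t(\unit)\gamma^r_t(\unit)^*=\unit$, but $\gamma^r_{\rmi}(\unit)$ is only notation for an analytic continuation, and $\phi$ is neither positive nor tracial, so unitarity would give nothing anyway. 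The paper is unable to prove $\phi^r(\unit)=1$ at this stage and defers it to \eqref{eq:JLO-phi-r1}, where it falls out of the homotopy formula $\tau^r_0(\unit)=\tau_0(\unit)+\int_0^r(\partial G^q)_0(\unit)\,\rmd q$ in the proof of Theorem \ref{th:JLO-main}.

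A smaller point: the ``key computational identity'' you cite for $(S_2)$, namely $\gamma^r_{\rmi}(\unit)=\gamma^r_t(\unit)^*\,\alpha_t(\gamma^r_{\rmi}(\unit))\,\gamma^r_t(\unit)$, is not a consequence of the cocycle relations; the correct relation is $\gamma^r_{t+s}(\unit)=\gamma^r_t(\unit)\,\alpha_t(\gamma^r_s(\unit))$, equivalently $\alpha_t(\gamma^r_s(\unit))=\gamma^r_t(\unit)^*\gamma^r_{t+s}(\unit)$, and the paper instead uses Lemma \ref{lem:JLO-gamma}(4) to write $\phi(x\alpha^r_t(z)\gamma^r_u(\unit))=\phi(x\gamma^r_t(z)\alpha_t(\gamma^r_{u-t}(\unit)))$, continues analytically in $(t,u)$ to $\T^2$, and evaluates the boundary value summand-wise with Lemma \ref{lem:JLO-phi}(1). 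Your treatment of the negative statements ($(S_6)$, the growth bound in $(S_2)$, local normality) is in line with the paper, which likewise only establishes the weaker local bound \eqref{eq:JLO-perturb-bound} and remarks that the stronger properties are not expected.
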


\begin{proof}
Notice first that, for every $x,y,z\in\dom(\phi)_c$, the function
\begin{align*}
(t,u)\in\R^2 \mapsto& \phi(x \gamma^r_t(y) \alpha_u(z))
=  \sum_{n\in\NN}  (\rmi t)^n \int_{\Delta_n} 
\phi \big(x \alpha_{s_1t}(a_r)\cdots \alpha_{s_nt}(a_r) \alpha_t(y)\alpha_u(z)
\big) \rmd^n s, 
\end{align*}
has a unique analytic continuation to the tube $\T^2=\{(t,u)\in\C^2: 0\le
\Im(t)\le \Im(u) \le 1\}$ which is analytic on the interior of $\T^2$. This can
be seen as follows: Each of the summands on the right-hand side has a unique
analytic continuation to $\T^{2}$, which is essentially a consequence of
\eqref{eq:JLO-analcont}; moreover those continuations are bounded as in
\eqref{eq:JLO-exp-est}. Thus the sum of those continuations converges compactly
and defines an analytic continuation of $t\mapsto \phi(x \gamma^r_t(y)
\alpha_u(z))$ to $\T^2$ according to Weierstrass' convergence criterion. In the
same way but more generally, we see that, for every $n\in\N$ and
$x_i\in\dom(\phi)_c$,
\begin{equation}\label{eq:JLO-lem1}
(t_1,\ldots,t_n)\in\R^n \mapsto \phi \big(x_0 \gamma^r_{t_1}(x_1)
\alpha_{t_1}(\gamma^r_{t_2-t_1}(x_1))\cdots
\alpha_{t_{n-1}}(\gamma^r_{t_n-t_{n-1}}(x_n))\big)
\end{equation}
has a unique analytic continuation to $\T^{n}$.

Let us keep on record an explicit local bound for the analytic continuation.
Let $I$ be large enough so that $Q\in\dom(\phi)_I$. Then, for every $r\in[0,1]$
and $x\in\dom(\phi)_I$, we find 
\begin{equation}\label{eq:JLO-perturb-bound}
\begin{aligned}
|\phi(x\gamma^r_{\rmi} (\unit))| \le& 
\sum_{n=0}^\infty \Big| \int_{\Delta_n} 
\phi(x\alpha_{\rmi s_1}(a_r)\cdots  \alpha_{ \rmi s_n}(a_r)) \rmd^n s \Big| \\
\le& \sum_{n=0}^\infty \frac{1}{n!} C_1 \rme^{4C_2 (1+|I|)^2(n+1)} \| a_r\|^n
\|x\| \le C_I \|x\|,
\end{aligned}
\end{equation}
with $C_I := C_1\exp(4C_2 (1+|I|)^2+\|a_r\| \rme^{4C_2 (1+|I|)^2}) >0$ using
\eqref{eq:JLO-exp-est}. Thus we have local boundedness of $\phi^r$ for
sufficiently large interval $I$, hence for all intervals (by isotony). We
expect, however, neither local-exponential boundedness nor local normality for
$\phi^r$, so only a weaker but for our purposes sufficient version of $(S_1)$
and $(S_6)$.

We have to check the sKMS property $(S_2)$, and we may do this summand-wise
again owing to the above reasoning. Given $x,z\in\dom(\phi)_c$ and applying
Lemma \ref{lem:JLO-gamma}(4), we see that $\phi(x \alpha^r_t(z)
\gamma^r_{u}(\unit)))
=\phi(x \gamma^r_t(z) \alpha_t(\gamma^r_{u-t}(\unit)))$ has a unique analytic
continuation in $(t,u)$ to $\T^2$ according to \eqref{eq:JLO-lem1}. Then
\[
F_{x,z}(t):= \phi(x \alpha^r_t(z) \gamma^r_{\rmi}(\unit)), \quad t\in\R,
\]
has a unique analytic continuation to $\T^1$, and
\begin{equation}\label{eq:JLO-Fxz}
\begin{aligned}
F_{x,z}(t+\rmi) =& \phi\big(x \alpha^r_{t+\rmi}(z)\gamma^r_{\rmi}(\unit)\big) 
= \phi\big(x \alpha^r_{\rmi}(\alpha^r_t(z))\gamma^r_{\rmi}(\unit)\big) \\
=& \phi\big(x \gamma^r_{\rmi}(\unit) \alpha_{\rmi} (\alpha^r_{t}(z))\big) \\
=& \sum_{n=0}^\infty (-1)^n \int_{\Delta_n} \phi \big(x \alpha_{\rmi
s_1}(a_r)\cdots \alpha_{\rmi s_n}(a_r)\alpha_{\rmi} (\alpha^r_{t}(z))\big)
\rmd^n s\\
=& \sum_{n=0}^\infty (-1)^n \int_{\Delta_n} \phi \big(\alpha^r_{t}(z) \gamma(x)
\alpha_{\rmi s_1}(\gamma(a_r))\cdots \alpha_{\rmi s_n}(\gamma(a_r))\big) \rmd^n
s\\
=& \phi\big(\alpha^r_t(z) \gamma(x)\gamma^r_{\rmi}(\unit) \big).
\end{aligned}
\end{equation}
Here the second line follows from the analytic continuation of the function 
\[
s\mapsto \phi\big(x \alpha^r_{t+s}(z)\gamma^r_{s}(\unit) \big)
=\phi\big(x \alpha^r_{s}(\alpha^r_t(z))\gamma^r_{s}(\unit) \big)
=\phi\big(x \gamma^r_s(\unit)\alpha_s(\alpha^r_{t}(z)) \big)
\]
applying Lemma \ref{lem:JLO-gamma}(4); the fourth one follows from Lemma
\ref{lem:JLO-phi}(1), while the last line is clear since $\gamma(a_r)=a_r$. Thus
the sKMS property $(S_2)$ (except for the polynomial growth condition) holds for
$\phi^r$.

Let us check the remaining conditions for sKMS functionals. We may choose
$\dom(\phi^r):=\dom(\phi)_c$ since $\gamma^r_t(\unit)$ is smooth and localized.
Then, for all $x\in\dom(\phi)_c$,
\begin{align*}
\phi^r(x^*) 
=& \sum_{n=0}^\infty (-1)^n \int_{\Delta_n} \phi \big(x^* \alpha_{\rmi
s_1}(a_r)\cdots \alpha_{\rmi s_n}(a_r)\big) \rmd^n s\\
=& \sum_{n=0}^\infty (-1)^n \int_{\Delta_n} \phi \big(x^* \alpha_{\rmi
s_1}(\gamma(a_r))\cdots \alpha_{\rmi s_n}(\gamma(a_r))\big) \rmd^n s\\
=& \sum_{n=0}^\infty (-1)^n \int_{\Delta_n} \phi \big(\alpha_{\rmi
s_1}(a_r)\cdots \alpha_{\rmi s_n}(a_r)\alpha_{\rmi} (x^*)\big) \rmd^n s
\\
=& \overline{ \sum_{n=0}^\infty (-1)^n \int_{\Delta_n} \phi
\big(\alpha_{-\rmi}(x) \alpha_{-\rmi s_n}(a_r)\cdots \alpha_{-\rmi
s_1}(a_r)\big) \rmd^n s }\\
=& \overline{ \sum_{n=0}^\infty (-1)^n \int_{\Delta_n} \phi \big(x
\alpha_{\rmi-\rmi s_n}(a_r)\cdots \alpha_{\rmi-\rmi s_1}(a_r)\big) \rmd^n s } \\
=& \overline{ \sum_{n=0}^\infty (-1)^n \int_{\Delta_n} \phi \big(x \alpha_{\rmi
s_1'}(a_r)\cdots \alpha_{\rmi s_n'}(a_r)\big) \rmd^n s' } \\
=& \overline{ \phi\big(x \gamma^r_{\rmi}(\unit) \big)} = \overline{\phi^r(x)}.
\end{align*}
Here the third line follows from Lemma \ref{lem:JLO-phi}(1), the fourth one from
Lemma \ref{lem:JLO-phi}(2), and the fifth one from (constant) analytic
continuation to $\C$ of the constant function $t\in\R \mapsto \phi \circ
\alpha_t(y)$, \ie from $\phi\circ\alpha_{-\rmi}(y) = \phi(y)$. We conclude with
a change of variable $s_k'=1-s_{n+1-k}$. Thus Hermitianity and all other
properties in $(S_0)$ are clear.

The normalization property $\phi^r(\unit)=1$ will be shown in
\eqref{eq:JLO-phi-r1} as a corollary of the proof of Theorem \ref{th:JLO-main},
which does not make use of $(S_3)$ but instead only of the finiteness of
$\phi^r(\unit)$.

Concerning $(S_4)$, we have to show $\phi^r\circ\delta_r(z)=0$, for
$z\in\Cci(\delta_r)_c$. 
We claim that
\begin{equation}\label{eq:JLO-lem2}
\phi(z e(t))=0,\quad e(t):= \delta(\gamma^r_t(\unit))+ rQ \gamma^r_t(\unit) -
r\gamma^r_t(\unit) \alpha_t(Q),
\end{equation}
for all $t\in\R$, which implies in particular that $t\in\R\mapsto\phi(z e(t))$
has a unique and trivial analytic continuation to $\C$. Using then first
$\phi\circ\delta=0$ and analytic continuation for the first
term on the right-hand side below and $\phi\circ\gamma=\gamma$ and Lemma
\ref{lem:JLO-phi}(1) with a similar reasoning as in \eqref{eq:JLO-Fxz} for the
second one, we obtain
\begin{align*}
 \phi^r\circ\delta_r(z) =& \phi\big(\delta(z)\gamma^r_{\rmi}(\unit)\big)
+ r\phi\big(Qz \gamma^r_{\rmi}(\unit)\big) - r
\phi\big(\gamma(z)Q\gamma^r_{\rmi}(\unit)\big)\\
=& \phi\big(-\gamma(z)\delta(\gamma^r_{\rmi}(\unit))\big) +r \phi\big(\gamma(z)
\gamma^r_{\rmi}(\unit)\alpha_{\rmi}(Q) \big) - r
\phi\big(\gamma(z)Q\gamma^r_{\rmi}(\unit)\big) \\
=& - \phi \big(\gamma(z) e(\rmi)\big) = 0.
\end{align*}

\emph{Proof of the claim.} According to \eqref{eq:JLO-lem1}, $E: t\in\R\mapsto
\phi(z e(t))$ extends to an analytic function on $\T^1$; moreover, it is
differentiable on $\R$ as follows from the definition of $\gamma^r_t$
in Proposition \ref{prop:JLO-alpha}. Differentiation by $t$ together with
properties $(S_4)$ and $(S_5)$ for $\phi$ yields
\begin{align*}
-\rmi\frac{\rmd}{\rmd t} E(t)
=& \phi\big( z (\delta +rQ)(\delta^2+ a_r)(\gamma^r_t(\unit)) - 
z (\delta^2 + a_r)( \gamma^r_t(\unit)\alpha_t(rQ)) \big)\\ 
=& \phi \big( z(\delta^2+ a_r) e(t)\big) \\
=& \phi \big(\delta^2 (z e(t))\big) - \phi\big( \delta^2(z) e(t) \big) + \phi
\big( z a_r e(t) \big)\\
=& \phi\big( (\delta^2+a_r)(z^*)^* e(t)\big), \quad t\in\R.
\end{align*}
Recursively one finds $z_{r,n}:=(\delta^2+ a_r)^n(z^*)^* \in\Cci(\delta)_c$ such
that
\[
(-\rmi)^n \frac{\rmd^n}{\rmd t^n} E(t) =\phi \big( z_{r,n} e(t) \big) , \quad
n\in\NN, t\in\R.
\]
Since all $z_{r,n} e(t)$ are localized (uniformly for $t$ in bounded intervals)
and $\phi$ is locally bounded, all derivatives of $E$ are continuous; in other
words, $E$ is smooth on $\R$ and furthermore, according to our preceding
discussion, analytic on the interior of $\T^1$. Since $e(0)=0$, we get for all
derivatives: $E^{(n)}(0)=0$, $n\in\N_0$. Applying the $\Cci$-version of Schwarz'
reflection principle \cite[Th.1]{BL90} shows that $E\equiv 0$ on the whole strip
$\T^1$, thus \eqref{eq:JLO-lem2}, which proves the claim.

Finally, $(S_5)$ is shown using the above analytic continuation properties
together with the expression for the $\phi$-weak generator of $\alpha^r$ in
Proposition \ref{prop:JLO-alpha}: 
\begin{align*}
-\rmi \frac{\rmd}{\rmd t}  \phi^r \big( x\alpha^r_t(y) z)\restriction_{t=0}
=& -\rmi  \frac{\rmd}{\rmd t} \phi \big( x\alpha^r_t(y)
z\gamma^r_{\rmi}(\unit)\big)\restriction_{t=0}
=  \phi \big( x\delta_r^2(y) z\gamma^r_{\rmi}(\unit)\big)\restriction_{t=0}
= \phi^r \big( x \delta_r^2(y) z\big).
\end{align*}
\end{proof}

We are now ready for the main result:

\begin{theorem}\label{th:JLO-main}
Given a local-exponentially bounded sKMS functional $\phi$ for a graded quantum
dynamical system $(\AA,\gamma,(\alpha_t)_{t\in\R},\delta)$, the even JLO cochain
$\tau$ over $\Cci(\delta)^\gamma_c$ is a local-entire cyclic cocycle. Moreover,
it is homotopy-invariant: given an odd selfadjoint perturbation $Q\in
\Cci(\delta)_c$, the corresponding perturbed functionals $\phi^r$ for the
perturbed system $(\AA,\gamma,(\alpha^r_t)_{t\in\R},\delta_r)$ in Definition
\ref{def:JLO-perturb} give rise to even JLO local-entire cyclic cocycles
$\tau^r$ again, which are mutually cohomologous, for all $r\in[0,1]$. 
\end{theorem}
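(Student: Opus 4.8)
The first assertion merely restates Theorem~\ref{prop:JLO-cocycle}. For the cocycles $\tau^r$ attached to the perturbed systems, rather than reproving everything from scratch I would re-run the proof of Theorem~\ref{prop:JLO-cocycle}: by Proposition~\ref{prop:JLO-phi}, $\phi^r$ is an sKMS functional for $(\AA,\gamma,(\alpha^r_t)_{t\in\R},\delta_r)$ satisfying $(S_0)$, $(S_2)$ (bar the polynomial bound), $(S_4)$, $(S_5)$ and, in place of $(S_1)$ and $(S_6)$, the local estimate \eqref{eq:JLO-perturb-bound}; and the proof of Theorem~\ref{prop:JLO-cocycle} invoked neither $(S_3)$ nor the polynomial bound, only $(S_0)$, $(S_2)$, $(S_4)$, $(S_5)$ together with the analytic continuation \eqref{eq:JLO-analcont} and the exponential bound \eqref{eq:JLO-exp-est}. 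Expanding $\gamma^r_{\rmi}(\unit)$ in its Dyson series and bounding each term by \eqref{eq:JLO-exp-est} for $\phi$ (on an $I$ large enough that $Q\in\dom(\phi)_I$, then on all intervals by isotony) shows that $\phi^r$ obeys an estimate of the same shape as \eqref{eq:JLO-exp-est} with $r$-independent constants. Since moreover $\Cci(\delta_r)_c=\Cci(\delta)_c$ and $\Cci(\delta_r)^\gamma_c=\Cci(\delta)^\gamma_c$ (the smooth algebra and its $\gamma$-fixed points are unchanged under the bounded perturbation $r[Q,\cdot]$, and the graph norms of $\delta$ and $\delta_r$ are equivalent, so the local-entire complexes coincide), the argument of Theorem~\ref{prop:JLO-cocycle} applies line by line and yields an even local-entire cyclic cocycle $\tau^r$ on $\Cci(\delta)^\gamma_c$, for every $r\in[0,1]$.

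For the homotopy invariance I would produce a transgression cochain: an odd local-entire cochain $\eta^r$ on $\Cci(\delta)^\gamma_c$ with $\tfrac{\rmd}{\rmd r}\tau^r=\partial\eta^r=(B+b)\eta^r$ for all $r\in[0,1]$. Once this is in hand, integrating in $r$ gives $\tau^{r_1}-\tau^{r_0}=\partial\bigl(\int_{r_0}^{r_1}\eta^{r}\,\rmd r\bigr)$, where $\int_{r_0}^{r_1}\eta^{r}\,\rmd r$ is again a local-entire cochain by the uniform-in-$r$ form of the local-entireness estimate; since $\partial^2=0$ on local-entire cochains, any two $\tau^r$ — in particular $\tau^1$ and $\tau^0=\tau$ — are then cohomologous. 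The candidate, patterned on the classical JLO transgression (\cite{JLO1,JLW}) and on Part~2 of the proof of Theorem~\ref{prop:JLO-cocycle}, is $\eta^r_n:=0$ for even $n$ and, for odd $n$,
\begin{align*}
\eta^r_n(x_0,\ldots,x_n):=\sum_{j=0}^{n}(-1)^{j}\,\acont_{t\ra\rmi}\int_{\Delta_{n+1}}\phi^r\Big(&x_0\,\alpha^r_{s_1t}(\delta_r(x_1))\cdots\alpha^r_{s_jt}(\delta_r(x_j))\,\alpha^r_{s_{j+1}t}(Q)\\
&\cdot\,\alpha^r_{s_{j+2}t}(\delta_r(x_{j+1}))\cdots\alpha^r_{s_{n+1}t}(\delta_r(x_n))\Big)\,\rmd^{n+1}s,
\end{align*}
its analytic continuation and local-entireness bound being obtained exactly as in Part~1 of the proof of Theorem~\ref{prop:JLO-cocycle}, the single extra insertion of $Q$ only shifting the constants (uniformly in $r$, as $\|Q\|$ is fixed). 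Note that $\eta^r_n$ vanishes whenever some $x_i\in\C\unit$ with $i\ge1$, because then $\delta_r(x_i)=0$.

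The crux is then the transgression identity $\tfrac{\rmd}{\rmd r}\tau^r=\partial\eta^r$. My plan is to differentiate $\tau^r_n(x_0,\ldots,x_n)$ in $r$, pulling $\tfrac{\rmd}{\rmd r}$ inside the simplex integral, the analytic continuation, and the Dyson expansions of $\alpha^r$ and of $\phi^r=\phi(\,\cdot\,\gamma^r_{\rmi}(\unit))$ — legitimate by the uniform convergence from Proposition~\ref{prop:JLO-alpha}. Three families of terms will appear: from $\tfrac{\rmd}{\rmd r}\delta_r(x_i)=[Q,x_i]$; from $\tfrac{\rmd}{\rmd r}$ hitting an $\alpha^r$, inserting $\ad(\dot a_r)$; and from $\tfrac{\rmd}{\rmd r}$ hitting $\gamma^r_{\rmi}(\unit)$, inserting $\dot a_r$ by left multiplication; here $\dot a_r=\delta(Q)+2rQ^2=\delta_r(Q)$, using the graded-commutator identity $[Q,Q]=2Q^2$ for the odd element $Q$. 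I would then trade every resulting $\delta_r^2$-type insertion for a derivative $\tfrac{\partial}{\partial s_j}$ via the weak-supersymmetry / $\phi$-weak-generator identity of Proposition~\ref{prop:JLO-phi}, move the $\delta_r$'s off $Q$ through the superderivation Leibniz rule, and apply the simplicial integration-by-parts relations of Lemma~\ref{lem:JLO-cocycle}(i)--(ii) — which hold verbatim for $\phi^r$ since their proof uses only the sKMS property $(S_2)$ — together with the algebraic rearrangement from Part~2 of the proof of Theorem~\ref{prop:JLO-cocycle}; the collected contributions should regroup precisely into $(b\eta^r)_n+(B\eta^r)_n$. Integrating from $0$ to $r$ then gives $\tau^r-\tau=\partial\bigl(\int_0^r\eta^{r'}\,\rmd r'\bigr)$; comparing degree-$0$ components at $x_0=\unit$ (and using that a local-entire $1$-cochain vanishes on $(\unit,\unit)$) yields $\phi^r(\unit)=1$, the normalization left open in Proposition~\ref{prop:JLO-phi}.

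The hard part will be exactly this last identity: carrying out the differentiation under the nested Dyson series, simplex integrals and analytic continuations rigorously — in a setting where $\alpha^r_{\rmi s}$ has no intrinsic meaning and $\phi^r$ is merely locally bounded — and then pushing through the lengthy simplicial-and-Leibniz bookkeeping that shows the $r$-derivative reassembles into $\partial\eta^r$, all the while keeping every intermediate expression supported in one fixed local algebra so that local boundedness of $\phi^r$ is all that is needed.
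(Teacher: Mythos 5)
Your proposal follows essentially the same route as the paper: the perturbed cocycles are handled by Dyson-expanding $\alpha^r$ and $\gamma^r_{\rmi}(\unit)$ and importing the continuation and exponential bound from the unperturbed $\phi$ rather than from a polynomial growth condition on $\phi^r$, and your transgression cochain $\eta^r$ coincides (up to relabelling) with the paper's $G^r$ in \eqref{eq:JLO-boundary}, with the identity $\tfrac{\rmd}{\rmd r}\tau^r=\partial G^r$ established by the same differentiation of the Dyson series using $\dot a_r=\delta_r(Q)$ and the simplicial relations \eqref{eq:JLO-F1}--\eqref{eq:JLO-F6}. The paper likewise derives $\phi^r(\unit)=1$ as a corollary in \eqref{eq:JLO-phi-r1}, so your plan matches its structure in all essentials.
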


\begin{proof}
The first statement is just Theorem \ref{prop:JLO-cocycle}. 

Let us consider the perturbed functionals, and let $I\in\I$ be an arbitrary
fixed interval such that $Q\in\Cci(\delta)_I$. Then by definition of
$\gamma_t^r$
and $\alpha^r_t=\gamma^r_t(\unit)\alpha_t(\cdot)(\gamma^r_t(\unit))^*$, we have,
for every $s\in\R^{n+1}$ and $x_i\in\Cci(\delta)_c$:
\begin{equation}\label{eq:JLO-phirsum}
\begin{aligned}
\phi( x_0 & \alpha^r_{s_1}(x_1) \cdots \alpha^r_{s_n}(x_n)
\gamma^r_{s_{n+1}}(\unit)) \\
= & \sum_{\vec{k}\in\NN^{n+1}} \rmi ^{|\vec{k}|}
\int_{\Delta^{s_1}_{k_1}} \cdots \int_{\Delta^{s_{n+1}-s_n}_{k_{n+1}}}
\phi\Big( x_0 \ad( \alpha_{t_{1,1}}(a_r))\cdots
\ad(\alpha_{t_{1,k_1}}(a_r))\ad(\alpha_{s_1}(x_1))\cdots\\ 
& \cdots\ad(\alpha_{s_n+t_{n+1,1}}(a_r)) \cdots
\ad(\alpha_{s_n+t_{n+1,k_{n+1}}}(a_r))\alpha_{s_{n+1}}(\unit)\Big) 
\rmd^{k_{n+1}} t_{n+1}\cdots\rmd^{k_1}t_1
\end{aligned}
\end{equation}
Following the argument of \eqref{eq:JLO-analcont} or \eqref{eq:JLO-lem1}, each
of the integrands has a unique analytic continuation to $\T^{|\vec{k}|}$ (with
the usual multi-index notation), and the integrals thus have a continuation in
$s$ to $\T^{n+1}$. Moreover, according to \eqref{eq:JLO-exp-est} and as
explained also in the proof of Proposition \ref{prop:JLO-phi}, the latter
are bounded by
\[
C_1 \rme^{2C_2 (1+|I|)^2(|\vec{k}|+1)} 
\rme ^{C_2 (1+|I|) \sum_{i=1}^{n+1} k_i |z_i|^2} \frac{1}{\vec{k}!}
|z|^{\vec{k}} \; \|2 a_r\|^{|\vec{k}|} \cdot \|x_0\|\cdots \| x_n\|, \quad
z\in\T^{n+1}.
\]
Thus \eqref{eq:JLO-phirsum} is the sum of analytically continuable functions and
the sum of the continuations is compactly convergent and hence analytic. In
fact, we have for $z\in\T^{n+1}$:
\begin{align*}
|\phi( x_0 & \alpha^r_{z_1}(x_1)  ...\alpha^r_{z_n}(x_n)
\gamma^r_{z_{n+1}}(\unit)) |
\\
\le & \sum_{\vec{k}\in\NN^{n+1}} C_1 \rme^{2C_2 (1+|I|)^2(|\vec{k}|+1)} 
\rme ^{C_2 (1+|I|) \sum_{i=1}^{n+1} k_i |z_i|^2} \frac{1}{\vec{k}!}
|z|^{\vec{k}} \|2a_r\|^{|\vec{k}|} \cdot \|x_0\|\cdots \| x_n\| \\
\le& C_1 \rme^{ 2C_2(1+|I|)^2} \exp \Big(\Big( |z_1|\rme^{2C_2(1+ |I|)^2
(|z_1|^2+1)} +...+ |z_{n+1}|\rme^{2C_2(1+ |I|)^2 (|z_{n+1}|^2+1)} \Big)
\|2a_r\| \Big) \\
&\times \|x_0\|\cdots \| x_n\| \\
\le& C_1  \rme^{ 2C_2(1+|I|)^2}\exp \Big((n+1)\max_i(|z_i|)
\rme^{2C_2 (1+|I|)^2 (\max_i |z_i|^2 +1)} \|2a_r\| \Big) \|x_0\|\cdots \| x_n\|.
\end{align*}
Integration over $\rmi\Delta_n$ then gives rise to the
well-defined 
\[
F^r_n(x_0,...,x_n):=  \int_{\Delta_n} \phi^r(x_0 \alpha^r_{\rmi s_1}(x_1)\cdots
\alpha^r_{\rmi s_n}(x_n)) \rmd^n s, \quad x_i\in \Cci(\delta)_c, 
\]
and for the corresponding JLO cochain $\tau^r$ we therefore find, for every
$I\in\I$: 
\begin{align*}
\sqrt{n} \|\tau_n^r\restriction_{\Cci(\delta)^\gamma_I} \|_*^{1/n}
\le& \sqrt{n} \Big( \frac{1}{n!}C_1  \rme^{ 2C_2(1+|I|)^2}\exp\big((n+1)
\rme^{4C_2 (1+|I|)^2} \|2a_r\| \big) \Big)^{1/n}\\
\sim& \frac{1}{\sqrt{n}} \exp \Big( \rme^{4C_2 (1+|I|)^2}
\|2a_r\|\Big), \quad n\ra\infty,
\end{align*}
which converges to $0$ for $n\ra\infty$, so $\tau^r$ is in fact
local-entire. The cyclic cocycle condition is purely algebraic and literally
goes like (Part 2) of the proof of Theorem \ref{prop:JLO-cocycle}, based on the
fact that $\phi^r$ satisfies the sKMS condition for the perturbed system
$(\AA,\gamma,(\alpha^r_t)_{t\in\R},\delta_r)$. The precise growth factor in
$(S_2)$, which is different for the perturbed functional, does not play a role
here; it is needed in order to obtain the analytic continuations
\eqref{eq:JLO-analcont} and the bound \eqref{eq:JLO-exp-est}; in the case of the
perturbed system, we obtain continuation and upper bound from the corresponding
properties of the original system as just done. This way, Lemma
\ref{lem:JLO-cocycle}, reformulated for arbitrary $r\in[0,1]$, implies the
following equalities for $x_i\in\Cci(\delta)_c$ and $k=1,\ldots,n-1$:
\begin{gather}
F^r_n(x_0,...,x_n) = F^r_n(\gamma(x_n),x_0,...,x_{n-1}) \label{eq:JLO-F1}\\
F^r_{n}(x_0,x_1,...,\delta_r^2(x_k),...,x_n)=
F^r_{n-1}(x_0,...,x_{k-1}x_{k},...,x_n)-F^r_{n-1}(x_0,...,x_kx_{k+1},...,x_n) 
\label{eq:JLO-F2}\\
F^r_{n}(x_0,x_1,...,x_{n-1},\delta_r^2(x_n))=
F^r_{n-1}(x_0,...,x_{n-1}x_n)-F^r_{n-1}(\gamma(x_n)x_0,x_1,...,x_{n-1}) 
\label{eq:JLO-F4}\\
\sum_{j=0}^{n} F^r_{n+1}(\unit,x_{j},...,x_{n},\gamma(x_0),...,\gamma(x_{j-1}))
= F^r_{n}(x_0,...,x_{n}). \label{eq:JLO-F5}
\end{gather}
Moreover, together with \eqref{eq:JLO-BGLem} we obtain
\begin{equation}\label{eq:JLO-F6}
\sum_{j=0}^{n} F^r_n(\gamma(x_0),...,\gamma(x_{j-1}),\delta_r(x_j),
x_{j+1},...,x_n)=0.
\end{equation}

Concerning perturbation invariance of the cyclic cocycle $\tau$, we would
like to show that
\begin{equation}\label{eq:JLO-boundary}
 G^r_{n-1}(x_0,...,x_{n-1}) :=  \sum_{k=0}^{n-1} (-1)^k  F^r_n (x_0,
\delta_r(x_1),...,\delta_r(x_k),Q,..., \delta_r(x_{n-1})), \quad
x_i\in\Cci(\delta)^\gamma_c,
\end{equation}
for even and $G^r_{n-1}=0$ for odd $n\in\N$, defines a local-entire cochain on
$\Cci(\delta_r)^\gamma_c=\Cci(\delta)^\gamma_c$ such that
\begin{equation}\label{eq:JLO-main}
 \frac{\rmd}{\rmd r}\tau^{r} = \partial G^r.
\end{equation}
This would imply that, for every $q,r\in [0,1]$, the cochains $\tau^q$ and
$\tau^r$ differ by a coboundary, \ie are cohomologous. 

We first notice that the
cochain $(G^r_n)_{n\in \NN}$ above is clearly well-defined. The
local-entireness condition is verified in the same way as for the JLO cochain
$(\tau^r_n)_{n\in \NN}$  above, which becomes clear when writing $\tau^r$ in
terms of $F_n^r$, with $n\in\NN$. 

In order to prove \eqref{eq:JLO-main}, we have to calculate $\partial G^r$.
Applying \eqref{eq:JLO-F1}-\eqref{eq:JLO-F5} to the definition of the operator
$B$ in Definition \ref{def:JLO-ECC}, we obtain, for
$x_i\in\Cci(\delta)^\gamma_c$ (thus $\gamma(x_i)=x_i$,
$\gamma(\delta_r(x_i))=-\delta_r(x_i)$ and $\gamma(Q)=-Q$) and $n\in 2\NN$: 
\begin{align*}
BG^r_{n+1}(x_0,...,x_n) 
=& \sum_{j=0}^n  G^r_{n+1}(\unit,x_j,...,x_{j-1}) \\
=& \sum_{j=0}^n \Big( \sum_{k=0}^{j-1} (-1)^{k+2-j}
F^r_{n+1}(\unit,\delta_r(x_j),...,\delta_r(x_k),Q,...\delta_r(x_{j-1})) \\
&+ \sum_{k=j}^n (-1)^{k+1-j}
F^r_{n+1}(\unit,\delta_r(x_j),...,\delta_r(x_k),Q,...\delta_r(x_{j-1})) \Big)\\
=& \sum_{k=0}^n (-1)^{k+1} \Big(\sum_{j=0}^k (-1)^{j}
F^r_{n+1}(\unit,\delta_r(x_j),...,\delta_r(x_k),Q,...\delta_r(x_{j-1}))\\
&+\sum_{j=k+1}^n (-1)^{j+1}
F^r_{n+1}(\unit,\delta_r(x_j),...,\delta_r(x_k),Q,...\delta_r(x_{j-1})) \Big) \\
=& - \sum_{k=0}^{n} (-1)^k F^r_{n}(\delta_r(x_0),...,\delta_r(x_k), Q,
...,\delta_r(x_{n})),
\end{align*}
using \eqref{eq:JLO-F5} in the last line together with the fact that all
$\delta_r(x_i)$ and $Q$ are homogeneously odd. In the case of $b$ we find:
\begin{align*}
bG^r_{n-1} & (x_0,..,x_n)
= \sum_{j=0}^{n-1} (-1)^{j} G^r_{n-1}(x_0,...,x_jx_{j+1},...,x_{n}) 
+ (-1)^n G^r_{n-1}(x_nx_0,x_1,...,x_{n-1})\\
=& \sum_{j=0}^{n-1} \Big( \sum_{k=0}^{j-1} (-1)^{j+k}
F^r_{n}(x_0,\delta_r(x_1)...,\delta_r(x_k),Q,...,\delta_r(x_jx_{j+1}),...,
\delta_r(x_{n}))\\
&+ \sum_{k=j+1}^n (-1)^{j+k-1}
F^r_{n}(x_0,\delta_r(x_1),...,\delta_r(x_jx_{j+1}),...,\delta_r(x_k),Q,...,
\delta_r(x_{n})) \Big) \\
&+ \sum_{k=0}^{n-1} (-1)^{n+k}
F^r_{n}(x_nx_0,\delta_r(x_1),...,\delta_r(x_k),Q,...,\delta_r(x_{n-1}))
\end{align*}
\begin{align*}
=& \sum_{j=0}^{n-1} \Big( \sum_{k=0}^{j-1} (-1)^{j+k}
F^r_{n}(x_0,\delta_r(x_1)...,\delta_r(x_k),Q,...,\delta_r(x_j)x_{j+1} + x_j
\delta_r(x_{j+1}),...,\delta_r(x_{n}))\\
&+ \sum_{k=j+1}^n (-1)^{j+k-1} F^r_{n}(x_0,\delta_r(x_1),...,\delta_r(x_j)
x_{j+1} + x_j \delta_r(x_{j+1}),...,\delta_r(x_k),Q,...,\delta_r(x_{n})) \Big)
\\
&+ \sum_{k=0}^{n-1} (-1)^{n+k}
F^r_{n}(x_nx_0,\delta_r(x_1),...,\delta_r(x_k),Q,...,\delta_r(x_{n-1}))\\
=& \sum_{k=0}^n \Big( -\sum_{j=0}^k (-1)^{j+k-1}
F^r_{n+1}(x_0,\delta_r(x_1),...,\delta_r^2(x_j),...,\delta_r(x_k),Q,...,
\delta_r(x_{n})) \\
&-\sum_{j=k+1}^{n} (-1)^{j+k}
F^r_{n+1}(x_0,\delta_r(x_1)...,\delta_r(x_k),Q,...,\delta_r^2(x_j),...,
\delta_r(x_{n}))\Big)
\\
&+ \sum_{k=1}^n (-1)^{2k-1} F^r_{n}(x_0,\delta_r(x_1)...,\delta_r(x_{k-1}),x_k
Q,...,\delta_r(x_{n}))\\
&-\sum_{k=1}^n (-1)^{2k-1} F^r_{n}(x_0,\delta_r(x_1)...,\delta_r(x_{k-1}),Q
x_k,...,\delta_r(x_{n}))\\
=& \sum_{k=0}^n \Big( -\sum_{j=0}^k (-1)^{j+k-1}
F^r_{n+1}(x_0,\delta_r(x_1),...,\delta_r^2(x_j),...,\delta_r(x_k),Q,...,
\delta_r(x_{n})) \\
&-\sum_{j=k+1}^{n} (-1)^{j+k}
F^r_{n+1}(x_0,\delta_r(x_1)...,\delta_r(x_k),Q,...,\delta_r^2(x_j),...,
\delta_r(x_{n}))\Big) \\
&+ \sum_{k=1}^n
F^r_{n}(x_0,\delta_r(x_1)...,\delta_r(x_{k-1}),[Q,x_k],...,\delta_r(x_{n})) \\
=& \sum_{k=0}^{n} (-1)^k
F^r_{n+1}(\delta_r(x_0),\delta_r(x_1),...,\delta_r(x_k), Q, ...,\delta_r(x_{n}))
\\
&+ \sum_{k=0}^{n} (-1)^{k+k} F^r_{n+1}(x_0,\delta_r(x_1),...,\delta_r(x_k),
\delta_r(Q), ...,\delta_r(x_{n}))\\
&+ \sum_{k=1}^{n} F^r_{n}(x_0,\delta_r(x_1),...,[Q,x_k], ...,\delta_r(x_{n})),
\end{align*}
using \eqref{eq:JLO-F6} in the last and \eqref{eq:JLO-F2}, \eqref{eq:JLO-F4} in
the second but last equality. 

Let us turn to the left-hand side of \eqref{eq:JLO-main}: from the definition of
$\alpha^r_t$ and $\gamma^r_t$, we see that the functions $r\mapsto 
\phi(x\alpha^{r}_t (y)z)$ and $r\mapsto \phi(x\gamma^{r}_t(\unit))$ are
differentiable and that
\begin{align*}
\frac{\rmd}{\rmd r} \phi\big(x\alpha^{r}_t & (y)z\big) =
\sum_{n\in\NN} (\rmi t)^n \frac{\rmd}{\rmd r} \int_{\Delta_n} 
\phi\big(x\ad(\alpha_{t p_1}(a_r))\cdots \ad(\alpha_{t p_n}(a_r))
\alpha_t(y)z\big) \rmd^n p
\\
=&  \sum_{n\in\NN} \sum_{k=0}^n (\rmi s)^k (\rmi t-\rmi s)^{n-k} \int_0^t
\int_{\Delta_k} \int_{\Delta_{n-k}} 
\phi\big(x\ad(\alpha_{s p_1}(a_r))\cdots \ad(\alpha_{s p_k}(a_r)) \\
& \times \alpha_s \left( \ad(\dot{a}_r) \ad(\alpha_{(t-s)p_{k+1}}(a_r))\cdots
\ad(\alpha_{(t-s)p_n}(a_r))\alpha_{t-s}(y)\right)z\big) \rmd^{n-k} p \rmd^k p
\rmd s\\
=& \int_0^t \phi\big(x\alpha_s^r([\dot{a}_r, \alpha^r_{t-s}(y)])z\big) \rmd s
= \int_0^t \phi\big(x[\alpha_s^r(\delta_r(Q)), \alpha^r_t(y)]z\big) \rmd s,
\end{align*}
using $\dot{a}_r=\delta_r(Q)$. Analogously,
\begin{align*}
\frac{\rmd}{\rmd r} \phi\big(x\gamma^{r}_t(\unit)\big)
=&  \sum_{n\in\NN} \sum_{k=0}^n (\rmi s)^k (\rmi t-\rmi s)^{n-k} \int_0^t
\int_{\Delta_k} \int_{\Delta_{n-k}} 
\phi\big(x\alpha_{s p_1}(a_r)\cdots \alpha_{s p_k}(a_r) \\
& \times \alpha_s \left( \dot{a}_r \alpha_{(t-s)p_{k+1}}(a_r)\cdots
\alpha_{(t-s)p_n}(a_r)\alpha_{t-s}(x)\right)\big) \rmd^{n-k} p \rmd^k p \rmd s\\
=& \int_0^t \phi\big(x\gamma_s^r(\unit) \alpha_s(\dot{a}_r)
\alpha_s(\gamma^r_{t-s}(\unit))\big) \rmd s\\
=& \int_0^t \phi\big(x\gamma_s^r(\unit) \alpha_s(\dot{a}_r)
\gamma^r_{s}(\unit)^* \gamma^r_t(\unit)\big) \rmd s
= \int_0^t \phi\big(x\alpha^r_s(\delta_r(Q))\gamma^r_t(\unit)\big) \rmd s ,
\end{align*}
using the Lemma \ref{lem:JLO-gamma} in the last two steps.

Consider now, for given $x_0,..,x_n\in\Cci(\delta)^\gamma_c$, the functions
\[
r\in[0,1] \mapsto K_{x_0,..,x_n}(r; t_1,...,t_n,u) = \frac{\rmd}{\rmd r} \phi(
x_0 \alpha^r_{t_1}(\delta_r(x_1)) \cdots  \alpha^r_{t_n}(\delta_r(x_n))
\gamma^r_u(\unit)),
\quad t_i,u\in\R.
\]
Then we have, for $t\in\Delta_n^u$:
\begin{align*}
K_{x_0,..,x_n}& (r; t_1,...,t_n,u) \\
=& \sum_{j=1}^n  \phi\big( x_0 \alpha^r_{t_1}(\delta_r(x_1))
...\alpha^r_{t_j}([Q,x_j])\cdots  \alpha^r_{t_n}(\delta_r(x_n))
\gamma^r_u(\unit)\big) \\
&+ \sum_{j=1}^n  \int_0^{t_j} \phi\big( x_0 \alpha^r_{t_1}(\delta_r(x_1)) \cdots
 [\alpha^r_{s}(\delta_r(Q)),\alpha^r_{t_j}(\delta_r(x_j))]\cdots 
\alpha^r_{t_n}(\delta_r(x_n)) \gamma^r_u(\unit)\big) \rmd s \\
&+ \int_0^u \phi\big( x_0 \alpha^r_{t_1}(\delta_r(x_1)) \cdots 
\alpha^r_{t_n}(\delta_r(x_n)) \alpha_s(\delta_r(Q)) \gamma^r_u(\unit)\big) \rmd
s.
\end{align*}
In the same way as \eqref{eq:JLO-phirsum}, this has a unique analytic
continuation to the tube $\T^{n+1}$, and we obtain, for $t\in\Delta_n$:
\begin{align*}
K_{x_0,..,x_n}& (r; \rmi t_1,...,\rmi t_n,\rmi) 
= \sum_{j=1}^n  \phi\big( x_0 \alpha^r_{\rmi t_1}(\delta_r(x_1)) \cdots
\alpha^r_{\rmi t_j}([Q,x_j])\cdots  \alpha^r_{\rmi t_n}(\delta_r(x_n))
\gamma^r_{\rmi}(\unit)\big) + \\
& + \sum_{j=0}^n  \int_{t_j}^{t_{j+1}} \phi\big( x_0 \alpha^r_{\rmi
t_1}(\delta_r(x_1)) \cdots  \alpha^r_{\rmi t_j}(\delta_r(x_j)) \alpha^r_{\rmi
s}(\delta_r(Q)) \cdots  \alpha^r_{\rmi t_n}(\delta_r(x_n))
\gamma^r_{\rmi}(\unit)\big) \rmd s,
\end{align*}
where $t_0=0$ and $t_{n+1}=1$. Together with the definition of $\tau^r$ we thus
have
\begin{align*}
\frac{\rmd}{\rmd r} \tau^r_n(x_0,...,x_n) 
=& \int_{\Delta_n} K_{x_0,..,x_n}(r; \rmi t_1,..., \rmi t_n,\rmi) \rmd^n t\\
=&\sum_{k=1}^n  F^r_n(x_0,\delta_r(x_1),...,[Q,x_k],...,\delta_r(x_n)) \\
& + \sum_{k=0}^n  F^r_{n+1}( x_0,\delta_r(x_1),..., \delta_r(x_k),\delta_r(Q),
...,\delta_r(x_n))\\
=& (B G^r_{n+1} + b G^r_{n-1})(x_0,...,x_n) = (\partial G^r)_n(x_0,...,x_n).
\end{align*}
\end{proof}

As a corollary of the proof we find
\begin{equation}\label{eq:JLO-phi-r1}
\phi^r(\unit)= \tau^r_0(\unit) = \tau_0 (\unit) + \int_0^r (\partial G^q)_0
(\unit) \rmd q = \tau_0(\unit) + 0 = \phi(\unit) = 1, 
\end{equation}
which completes the proof of property $(S_3)$ in Proposition \ref{prop:JLO-phi}.

For the reader familiar with \cite{Hil1} or at least standard examples of
superconformal nets and the corresponding notation, we provide two short
illustrations of the above results:

\begin{example}\label{ex:JLO-FF}
\emph{Supersymmetric free field net}. The supersymmetric free field net $\A$ and
an associated sKMS functional $(\phi,\dom(\phi))$ have been extensively studied
in \cite[Sec.3]{Hil1}, and we refer to the notation introduced there, in
particular the construction of the superderivation $\delta$ and the sKMS
functional; $J$ and $F$ stand for the corresponding bosonic and fermionic free
field currents.

The corresponding even JLO cocycle $\tau$ on $\Cci(\delta)^\gamma_c$ is
nontrivial because
\[
\tau(\unit) = \tau_0(\unit) = \phi(\unit) = 1, 
\]
owing to the normalization condition $(S_3)$ on $\phi$.

An example of an admissible perturbation $\delta_r=\delta+r \ad Q$ which leaves
the class of $\tau$ invariant is given by
\[
 Q= \int_\R \alpha_t \left( 
 (J(f)-\rmi)^{-1}F(f) (J(f)+\rmi)^{-1} \right) h(t) \rmd t, 
\]
with arbitrary but fixed $f,h\in \Cci_c(\R)$, \ie compactly supported
$\R$-valued smooth functions on $\R$. This perturbation is selfadjoint, odd,
localized and smooth, \ie $Q\in \Cci(\delta)_c$.

It seems interesting to study explicit (co-)homology classes and pairings with
$K_0$-theory (and corresponding projections) and understand their physical
meaning. However, the computations are very tedious and left for future study.
The conceptually interesting projections investigated
in \cite[Sec.5]{CHL12} or \cite[Sec.4]{CCHW} are unfortunately global, hence not
in $\AA$ and not applicable here.

Obviously, we may replace the free field net by its rational extension as
discussed in \cite[Th.3.8]{Hil1}, and all the above results concerning the JLO
cocycle and perturbations should extend to that setting.
\end{example}

\begin{example}\label{ex:JLO-SVir}
\emph{Super-Virasoro net}. Since the super-Virasoro net with central charge
$c\ge 3/2$ is a subnet of the free field net, this example becomes a consequence
of the preceding one by restricting the cocycle to $\AA_{\SVir}\cap
\Cci(\delta)_c$, denoted by $\tau_{\SVir}$. Note that also the perturbed cocycle
defined by the perturbation $Q$  in the preceding example (probably not in
$\AA_{\SVir}\cap \Cci(\delta)_c$) restricts to a local-entire cyclic cocycle on
$\AA_{\SVir}\cap \Cci(\delta)_c$, which is cohomologous to $\tau_{\SVir}$.
Whether $\tau_{\SVir}$ is really meaningful and whether there are possible
perturbations \emph{in} $\AA_{\SVir}\cap \Cci(\delta)_c$ depends on whether 
$\A_{\SVir}(I)\cap \Cci(\delta)_c \subset \A_{\SVir}(I)$ is actually larger than
$\C\unit$. We expect this to be true, but a proof is missing so far, \cf
\cite[Th.3.10]{Hil1}.
\end{example}

\bigskip

{\footnotesize 

\noindent\textbf{Acknowledgements.}
I would like to thank Paolo Camassa, Sebastiano Carpi, and Roberto Longo for
several helpful and pleasant discussions, for comments and for pointing out
problems.
}

\bigskip

\end{document}